\newtheorem{lemma}{Lemma}[section]
\newtheorem{proposition}[lemma]{Proposition}
\newtheorem{theorem}[lemma]{Theorem}
\newtheorem{corollary}[lemma]{Corollary}
\theoremstyle{definition}
\newtheorem{definition}[lemma]{Definition}
\theoremstyle{remark}
\numberwithin{equation}{section} \numberwithin{table}{section}
\begin{document}
\title[Configurations in measurable sets]{A note on configurations in sets of positive density which occur at all large scales}

\author{Ian D. Morris}
\address{Department of Mathematics, University of Surrey, Guildford GU2 7XH, United Kingdom.}
\email{i.morris@surrey.ac.uk}

\maketitle
\begin{abstract}

Furstenberg, Katznelson and Weiss proved in the early 1980s that every measurable subset of the plane with positive density at infinity has the property that all sufficiently large real numbers are realised as the Euclidean distance between points in that set. Their proof used ergodic theory to study translations on a space of Lipschitz functions corresponding to closed subsets of the plane, combined with a measure-theoretical argument. We consider an alternative dynamical approach in which the phase space is given by the set of measurable functions from $\mathbb{R}^d$ to $[0,1]$, which we view as a compact subspace of $L^\infty(\mathbb{R}^d)$ in the weak-* topology. The pointwise ergodic theorem for $\mathbb{R}^d$-actions implies that with respect to any translation-invariant measure on this space, almost every function is asymptotically close to a constant function at large scales. This observation leads to a general sufficient condition for a configuration to occur in every set of positive upper Banach density at all sufficiently large scales, extending a recent theorem of B. Bukh. To illustrate the use of this criterion we apply it to prove a new result concerning three-point configurations in measurable subsets of the plane which form the vertices of a triangle with specified area and side length, yielding a new proof of a result related to work of R. Graham. 

Key words and phrases: Euclidean Ramsey theory, measurable sets, pointwise ergodic theorem. MSC Primary 05D10, 22A99, Secondary 37A15, 37A30.
\end{abstract}

\section{Introduction and main results}

Given a Lebesgue measurable subset $A$ of $\mathbb{R}^d$, let us define the \emph{upper density} of $A$ to be the quantity
\[\overline{d}(A):=\limsup_{t \to \infty} \frac{m(A \cap Q(0,t))}{m(Q(0,t))}\]
where $m$ denotes $d$-dimensional Lebesgue measure and $Q(x,t)$ denotes the closed solid cube in $\mathbb{R}^d$ with side length $t$, centre $x$, and sides oriented parallel to the co-ordinate axes. Define the \emph{upper Banach density} of $A$ to be the quantity
\[d^*(A):=\limsup_{t \to \infty} \sup_{x \in \mathbb{R}^d} \frac{m(A \cap Q(x,t))}{m(Q(x,t))}.\]
This article is motivated by the following result which was proved by H. Furstenberg, Y. Katznelson and B. Weiss \cite{FKW} in response to a conjecture of L. A. Sz\'ekely:
\begin{theorem}[Furstenberg--Katznelson--Weiss]\label{FuKaWe}
Let $A \subseteq \mathbb{R}^2$ be a Lebesgue measurable set such that $\overline{d}(A)>0$. Then for all sufficiently large real numbers $t$ we may find points $x, y \in A$ which are separated by a Euclidean distance of precisely $t$.
\end{theorem}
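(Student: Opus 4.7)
The plan is to follow the dynamical approach outlined in the abstract, arguing by contradiction. Suppose $A \subseteq \mathbb{R}^2$ is Lebesgue measurable with $\overline{d}(A)>0$ and yet there exists a sequence $t_n \to \infty$ for which no two points of $A$ are separated by Euclidean distance $t_n$. Write $\alpha := d^*(A) \geq \overline{d}(A) > 0$ and regard $f := \mathbf{1}_A$ as a point of the compact weak-$*$ space $X$ of measurable functions $\mathbb{R}^2 \to [0,1]$. The strategy is to rescale the bad scales $t_n$ to unit scale, construct a translation-invariant measure on a suitable orbit closure in $X$, and then apply the pointwise ergodic theorem to force the typical element of the orbit closure to be so close to a positive constant at large scales that it must inherit distance-$1$ pairs from the constant.

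Concretely, choose translates $x_n \in \mathbb{R}^2$ and radii $R_n$ with $R_n / t_n \to \infty$ and $m(A \cap Q(x_n, R_n))/R_n^2 \to \alpha$, and set $f_n(y) := \mathbf{1}_A(x_n + t_n y) \in X$. Then each $f_n$ has no pair at distance exactly $1$, while its mean value on the expanding cube $Q(0, R_n/t_n)$ tends to $\alpha$. Pass to a weak-$*$ cluster point $f_\infty \in X$, form the orbit closure $Y := \overline{\{T^v f_\infty : v \in \mathbb{R}^2\}} \subseteq X$ under the translation action, and construct by Krylov--Bogolyubov averaging over cubes a translation-invariant Borel probability measure $\mu$ on $Y$ arranged so that $\int_Y \int_{Q(0,1)} g(y)\,dy\, d\mu(g) = \alpha$. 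The pointwise ergodic theorem for the $\mathbb{R}^2$-action on $(Y, \mu)$, applied to each of the countably many weak-$*$ continuous linear observables $g \mapsto \int g \chi$ with $\chi$ running over a countable dense subset of $L^1$, then shows that for $\mu$-almost every $g$ the dilations $g(s\cdot)$ converge weak-$*$ as $s \to \infty$ to a constant function $c(g) \in [0,1]$, with $\int c(g)\, d\mu(g) = \alpha > 0$; in particular $c(g) > 0$ on a set of positive $\mu$-measure.

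To extract the contradiction, introduce, for each $\varepsilon > 0$, the smoothed bilinear functional
\[ I^{(\varepsilon)}(g) \;:=\; \int_{\mathbb{R}^2} g(x)\,(g \ast \rho_\varepsilon)(x)\,\psi(x)\,dx, \]
where $\rho_\varepsilon \in L^1(\mathbb{R}^2)$ is a non-negative compactly supported approximation to the uniform surface measure on $S^1$ and $\psi \in L^1(\mathbb{R}^2)$ is a fixed non-negative bump. The functional $I^{(\varepsilon)}$ is weak-$*$ continuous on $X$, by a short argument combining weak-$*$ convergence of $g_n$ with strong $L^1$-convergence of $(g_n \ast \rho_\varepsilon)\psi$; and on any positive constant $c$ one computes $I^{(\varepsilon)}(c) = c^2\|\rho_\varepsilon\|_1 \|\psi\|_1 > 0$. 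A change of variables identifies $I^{(\varepsilon)}$ at $g(s\cdot)$ with a suitable rescaling of the same functional at $g$, so combining the inherited vanishing of the distance-$1$ obstruction for $\mu$-a.e.\ $g$ with continuity of $I^{(\varepsilon)}$ and $g(s\cdot) \to c(g)$ weak-$*$ forces $I^{(\varepsilon)}(c(g)) = 0$ for a typical $g$ with $c(g) > 0$, a contradiction. A final Lebesgue-density argument converts the resulting smoothed ``near-distance-$t$ pair'' statement for $A$ back into an exact distance-$t$ pair in $A$, as demanded by the theorem.

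The hardest step is this last coordination. The hypothesis ``no distance-$t_n$ pair in $A$'' is a measure-zero condition that the smoothed functional $I^{(\varepsilon)}$ does not directly see: for fixed $\varepsilon > 0$, $I^{(\varepsilon)}(\mathbf{1}_A)$ may well be strictly positive, since $A$ can contain pairs at distances arbitrarily close to, but unequal to, $t_n$. Running the argument honestly therefore requires sending $\varepsilon \to 0$ along a carefully chosen diagonal together with the rescaling $n \to \infty$ and the dilation $s \to \infty$, so that the limiting $I^{(\varepsilon)}$ genuinely vanishes on $c(g)$ while remaining strictly positive at positive constants; this precise diagonal bookkeeping is what one expects to be encapsulated by the paper's general sufficient condition extending B.\ Bukh's theorem.
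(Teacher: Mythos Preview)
Your dynamical framework---weak-$*$ compactification of indicator functions, Krylov--Bogolyubov, pointwise ergodic theorem yielding $\mathcal{Z}_s g \to c(g)\mathbf{1}$---is exactly the engine the paper uses. The genuine gap is the smoothing. You replace the circle measure $\sigma$ by an $L^1$ approximation $\rho_\varepsilon$ in order to make $I^{(\varepsilon)}$ weak-$*$ continuous, and then, as you candidly admit, you cannot close the argument: the hypothesis ``no pair at distance exactly $t_n$'' does not force $I^{(\varepsilon)}(f_n)=0$, and the proposed $\varepsilon\to 0$ diagonal together with a final Lebesgue-density repair is left entirely unspecified. There is also a structural problem with rescaling \emph{before} passing to the orbit closure: the weak-$*$ limit $f_\infty$ of the $\{0,1\}$-valued $f_n$ is generally $[0,1]$-valued, so ``$f_\infty$ has no distance-$1$ pair'' is not even meaningful, and the relevant information has to be carried by the circle-integral functional itself rather than by a pointwise property.

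The paper sidesteps both difficulties with a single observation (Proposition~\ref{open}): the \emph{unsmoothed} functional $I(g)=\iint g(x)g(x-y)\,d\sigma(y)\,dx$ already has $\{I>0\}$ open in the weak-$*$ topology. The proof writes $I$ as a supremum of the localised versions $I_B(g)=I(\chi_B g)$ and shows each $I_B$ is continuous via Parseval, $I_B(g)=\int |\widehat{\chi_B g}|^2 \hat\sigma$, using that $\hat\sigma(\xi)\to 0$ as $\|\xi\|\to\infty$ to control the tail uniformly (the integrand is bounded by $m(B)^2$ on the remaining compact part). With this in hand no smoothing or Lebesgue-density patch is needed: the property $P(A)=1\Leftrightarrow I(\chi_A)>0$ is mild in the sense of the paper, and the dynamical machinery (Theorems~\ref{yapapa} and~\ref{yamama}, applied on the orbit closure of the \emph{original} $\chi_A$ and rescaling only at the end) delivers $\mathcal{Z}_t\chi_A\in\{I>0\}$ for all large $t$, which is exactly the existence of a distance-$t$ pair in $A$.
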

The proof given originally by Furstenberg, Katznelson and Weiss combines an ergodic-theoretic argument based on translations of closed subsets of the plane with a subsequent measure-theoretical argument. Shorter alternative proofs based on demonstrating the positivity of the integral
\[\int_{\mathbb{R}^2 \times S^1}\chi_A(x)\chi_A(x+ty)d(x,y)\]
were subsequently presented by K. Falconer and J. Marstrand \cite{FM} and J. Bourgain \cite{Bour} using techniques from geometric measure theory and Fourier analysis respectively, and a probabilistic proof was recently given by A. Quas \cite{Quas}; the last two of these results require only the weaker condition $d^*(A)>0$. Bourgain in fact proves the following stronger result: if $V$ is a configuration of $d$ points in $\mathbb{R}^d$ which do not lie in a $\left(d-2\right)$-dimensional subspace, and $A \subseteq \mathbb{R}^d$ is a measurable set with positive upper Banach density, then $A$ contains a set isometric to $tV$ for all sufficiently large real numbers $t$. On the other hand, in the case where $V$ consists of three evenly-spaced colinear points Bourgain exhibited an example of a positive-density measurable set $A \subset \mathbb{R}^3$ which is free from isometric copies of $tV$ at an unbounded set of scales $t$. 

In this article we are interested in giving general conditions under which a configuration or property must be satisfied in all positive-density sets at all sufficiently large scales. B. Bukh \cite[Theorem 8]{Bukh} has previously given a sufficient condition of this kind which we now describe. 
Let us say that a \emph{property} is a function $P$ from the set $\mathfrak{M}(\mathbb{R}^d)$ of all Lebesgue measurable subsets of $\mathbb{R}^d$ to $\{0,1\}$. We consider that $A \subset \mathbb{R}^d$ has property $P$ if $P(A)=1$ and does not have property $P$ if $P(A)=0$. We shall say that $A$ has property $P$ at all large scales if $P(t^{-1}A)=1$ for all sufficiently large real numbers $t$. The following definition paraphrases Bukh \cite{Bukh}:
\begin{definition}\label{ssable}
We shall say that a property $P$ has \emph{supersaturable complement} if there exists a function $I_P\colon \mathfrak{M}(\mathbb{R}^d)\to[0,+\infty]$ which satisfies the following seven axioms:
\begin{enumerate}
\item
There exists $\mathfrak{m}(P)>0$ such that if $\overline{d}(A)>\mathfrak{m}(P)$ then $P(A)=1$. 
\item
If $A\subseteq B$ then $I_P(A)\leq I_P(B)$ and $P(A)\leq P(B)$.
\item
If $I_P(A)>0$ then $A$ has property $P$.
\item
For all $v \in \mathbb{R}^d$ we have $P(A)=P(A+v)$ and $I_P(A)=I_P(A+v)$.
\item
There exists $r>0$ such that if all points of $A_1$ are at least distance $r$ away from all points of $A_2$, then $I_P(A_1\cup A_2) \geq I_P(A_1)+I_P(A_2)$ and $P(A_1 \cup A_2)=\max\{P(A_1),P(A_2)\}$.
\item
There exist $\varepsilon>0$ and a strictly positive function $f \colon (0,+\infty)\to\mathbb{R}$ such that if the set
\[\{x \in \mathbb{R}^d \colon m(Q(x,\delta)\cap A)>(1-\varepsilon)m(Q(x,\delta))\}\]
has property $P$, then $I_P(A)\geq f(\delta)$.
\item
If $A\subset Q(0,R)$ then
\[I_P(A) \geq g_P(\varepsilon)I_P\left(\left\{x \in \mathbb{R}^d \colon m(Q(x,\delta)\cap A)>\varepsilon m(Q(x,\delta))\right\}\right)-h_P(\varepsilon,\delta)R^d\]
where $g_P(\varepsilon)>0$ and $\lim_{\delta \to 0}h_P(\varepsilon,\delta)=0$ for each fixed $\varepsilon$.
\end{enumerate}
\end{definition}
Bukh's work in fact considers necessary conditions for measurable sets to \emph{omit} certain structures as opposed to sufficient conditions for measurable sets to \emph{contain} certain structures, and in respect of this the above description inverts the object considered by Bukh: $P$ is a property with supersaturable complement in the above sense if and only if $1-P$ is a supersaturable property in the original sense defined in \cite{Bukh}. Bukh obtains a number of interesting results concerning supersaturable properties, of which we single out the following:
\begin{theorem}[{Bukh, \cite[Theorem 8]{Bukh}}]
Let $P \colon \mathfrak{M}(\mathbb{R}^d)\to\{0,1\}$ be a  property with supersaturable complement such that $\mathfrak{m}(P)<1$. If $A\subset\mathbb{R}^d$ satisfies $\overline{d}(A)>0$, then $A$ has property $P$ at all large scales.
\end{theorem}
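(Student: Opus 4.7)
The plan is to argue by contradiction. Assume $\overline{d}(A)=\alpha>0$ but $P(t_n^{-1}A)=0$ along some unbounded sequence $t_n\to\infty$; by the contrapositive of axiom (iii) this forces $I_P(t_n^{-1}A)=0$, so it suffices to construct a strictly positive lower bound on $I_P(t^{-1}A)$ for all $t$ large enough.

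First I would localize. Positivity of $\overline{d}(A)$ produces, for each large $t$, cubes $Q_t$ of side $R_t\to\infty$ with $m(t^{-1}A\cap Q_t)\geq(\alpha/2)m(Q_t)$. Partition $Q_t$ into a grid of sub-cubes $Q_{t,i}$ of a fixed side $S$ (large compared to the constant $r$ from axiom (v)) separated by $r$-thick buffer zones; a pigeonhole argument produces $\Omega(R_t^d/S^d)$ ``good'' sub-cubes on which $t^{-1}A$ has density at least $\alpha/4$. Choosing $\varepsilon<\alpha/4$ and a scale $\delta$ with $t^{-1}\ll\delta\ll S$, the scaling identity $m(Q(x,\delta)\cap t^{-1}A)=t^{-d}m(Q(tx,t\delta)\cap A)$ combined with $\overline{d}(A)=\alpha$ shows that local densities of $t^{-1}A$ at scale $\delta$ concentrate near $\alpha$, so the $\varepsilon$-thickening $B_{t,i}$ of $t^{-1}A\cap Q_{t,i}$ fills a proportion of $Q_{t,i}$ arbitrarily close to one.

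The crux is to show $I_P(t^{-1}A\cap Q_{t,i})\geq c_0>0$ uniformly on good sub-cubes. Axiom (vii) gives
\[I_P(t^{-1}A\cap Q_{t,i})\geq g_P(\varepsilon)I_P(B_{t,i})-h_P(\varepsilon,\delta)S^d,\]
so it suffices to lower-bound $I_P(B_{t,i})$. For this one applies axiom (vi), which requires the $(1-\varepsilon')$-thickening $C$ of $B_{t,i}$ at some finer scale $\delta'$ to possess property $P$. Since $C$ covers a proportion close to one of $Q_{t,i}$, the natural device is to activate axiom (i) via an unbounded periodic extension $\widetilde{C}:=\bigcup_{v\in L\mathbb{Z}^d}(C+v)$ over a coarse lattice with $L$ slightly larger than $S+r$: the translates are then pairwise $r$-separated, and $\overline{d}(\widetilde{C})$ can be made as close to $1$ as desired by taking $S/L$ close to $1$, hence exceeds $\mathfrak{m}(P)$ by the hypothesis $\mathfrak{m}(P)<1$; axiom (i) yields $P(\widetilde{C})=1$. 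The delicate step is to descend from $P(\widetilde{C})=1$ to $P(C)=1$ using the max-formula in axiom (v) together with the translation-invariance of axiom (iv). Granting $P(C)=1$, axiom (vi) supplies $I_P(B_{t,i})\geq f(\delta')>0$, and taking $\delta$ small enough dominates the error $h_P(\varepsilon,\delta)S^d$ in axiom (vii).

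Summing the positive contributions over the $\Omega(R_t^d/S^d)$ good sub-cubes via axiom (v), followed by monotonicity (axiom (ii)), gives $I_P(t^{-1}A)\geq I_P(t^{-1}A\cap Q_t)\to\infty$ as $R_t\to\infty$, the desired contradiction. The main obstacle is the descent from $P(\widetilde{C})=1$ to $P(C)=1$: axiom (v)'s max-formula is stated only for pairs, and an infinite periodic union can in principle have $P=1$ without any bounded component having $P=1$ (for example the property ``$A$ is unbounded'' has this pathology). Overcoming this is where the strict inequality $\mathfrak{m}(P)<1$ must genuinely be exploited, via a careful quantitative argument ensuring that $C$ itself already fills enough of its enclosing cube to inherit property $P$ from the periodic extension; making this precise, perhaps by a combinatorial averaging over many admissible lattice offsets $L$, is the technical heart of the proof.
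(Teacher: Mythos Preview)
The paper does not prove this theorem: it is quoted as \cite[Theorem 8]{Bukh} and not reproved. What the paper \emph{does} do is give an alternative route that subsumes it: one shows that any property with supersaturable complement and $\mathfrak{m}(P)<1$ is mild (the openness of $\mathbb{U}=\{f\in\mathbb{S}_d:P(f)=1\}$ is taken from \cite[Lemma 12]{Bukh}, and $\delta\mathbf{1}\in\mathbb{U}$ comes from axiom (i)), and then one applies Theorem~\ref{milpro}, whose proof is entirely ergodic-theoretic---the pointwise ergodic theorem for the $\mathbb{R}^d$-translation action on the weak-* compact space $\mathbb{S}_d$ forces $\mathcal{Z}_t f\to d^*(f)\mathbf{1}$ along $\mu$-typical orbits. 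Your approach, by contrast, is a direct combinatorial attack on the seven axioms and never touches the dynamical system $(\mathbb{S}_d,\mathcal{T})$ at all. So the two routes are genuinely disjoint: yours would, if it worked, reprove Bukh's result on its own terms, while the paper's route trades the axiom-chasing for an ergodic argument at the cost of importing the openness lemma from Bukh.

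Your sketch has two real gaps. The one you flag yourself---passing from $P(\widetilde C)=1$ back down to $P(C)=1$---is not repaired by the device you propose. Axiom (v) plus translation invariance (iv) gives $P\bigl(\bigcup_{v\in F}(C+v)\bigr)=P(C)$ for every \emph{finite} $F\subset L\mathbb{Z}^d$, but nothing in the axiom list lets you pass to the infinite union, and averaging over lattice offsets does not help since each offset faces the same obstruction. The second gap is earlier: the assertion that ``local densities of $t^{-1}A$ at scale $\delta$ concentrate near $\alpha$'' does not follow from $\overline d(A)=\alpha$. The upper density is a $\limsup$ over cubes centred at the origin; it says nothing about densities in cubes centred at arbitrary points $tx$, and a set with $\overline d(A)=\alpha$ can have local densities at scale $t\delta$ ranging freely in $[0,1]$ (a half-space already shows this). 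Consequently the claim that the $\varepsilon$-thickening $B_{t,i}$ nearly fills $Q_{t,i}$ is unsupported, and without it axiom (vi) cannot be triggered.
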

Bukh in fact proves a stronger result concerning the simultaneous satisfaction of a finite number of properties $P_1,\ldots,P_n$ with supersaturable complement at widely-differing scales. Since our interest in this article is in direct extensions of Theorem \ref{FuKaWe} we restrict our attention to the case of a single property occuring at all large scales. In this article we shall use ergodic theory to give a much weaker sufficient condition for a configuration to appear at all sufficiently large scales in every positive-density measurable set. 

In order to state our results we require a few items of notation. We shall use the symbol $\mathbb{S}_d$ to denote the set of all $f \in L^\infty(\mathbb{R}^d)$ such that $0 \leq f \leq 1$ almost everywhere, and we equip this set with the weak-* topology  inherited from $L^\infty(\mathbb{R}^d)\simeq L^1(\mathbb{R}^d)^*$ with respect to which it is compact and metrisable. We use the symbol $\mathbf{1} \in \mathbb{S}_d$ to refer to the almost surely constant function with value $1$.
\begin{definition}
We say that a property $P \colon \mathfrak{M}(\mathbb{R}^d)\to\{0,1\}$ is $\delta$-\emph{mild}, where $\delta \in (0,1]$, if it satisfies the following properties:
\begin{enumerate}
\item
There exists an open set $\mathbb{U}\subset \mathbb{S}_d$ such that if $\chi_A \in \mathbb{U}$ then $P(A)=1$. 
\item
The open set $\mathbb{U}$ contains $\delta\mathbf{1}$.
\item
For all $v \in \mathbb{R}^d$ we have $P(A)=P(A+v)$.
\end{enumerate}
We shall say that a property is \emph{mild} if it is $\delta$-mild for every $\delta>0$. \end{definition}
The main result of this article is the following:
\begin{theorem}\label{milpro}
Let $P \colon \mathfrak{M}(\mathbb{R}^d) \to \{0,1\}$ be a $\delta$-mild property. If the upper Banach density of $A\subseteq\mathbb{R}^d$ is equal to $\delta$ then $A$ has property $P$ at all large scales.
\end{theorem}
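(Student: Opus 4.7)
Define operators $T_u f(x) := f(x+u)$ and $R_t f(x) := f(tx)$ acting on $\mathbb{S}_d$, and note the identity $\chi_{t^{-1}A - w} = R_t T_{tw}\chi_A$. By translation-invariance of $P$ and property (i) of $\delta$-mildness, it suffices to prove that for every sufficiently large $t$ the orbit $\{R_t T_u \chi_A : u \in \mathbb{R}^d\}$ meets the open set $\mathbb{U}$. Both $T_u$ and, for each fixed $t>0$, the rescaling $R_t$ are weak-* continuous on $\mathbb{S}_d$ (the former because it is translation on $L^\infty$, the latter because it dualises to the dilation $g \mapsto t^{-d} g(\cdot/t)$ of $L^1$ test functions).

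Using $d^*(A)=\delta$, choose sequences $w_n \in \mathbb{R}^d$ and $T_n \to \infty$ with $T_n^{-d} m(A \cap Q(w_n,T_n)) \to \delta$, and form the averaging probability measures $\mu_n := T_n^{-d}\int_{Q(w_n,T_n)} \delta_{T_u \chi_A} \, du$ on the weak-* orbit closure $X_A := \overline{\{T_u \chi_A : u \in \mathbb{R}^d\}}$. Any weak-* accumulation point $\mu$ is a translation-invariant probability measure on $X_A$ (a symmetric-difference comparison of the windows gives invariance), and a Fubini computation for the weak-* continuous function $\phi_0(f) := \int_{Q(0,1)} f$ yields $\int \phi_0 \, d\mu = \delta$.

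Apply the pointwise ergodic theorem for $\mathbb{R}^d$-actions to $\phi_0$: for $\mu$-a.e.\ $f$ the limit $c(f) := \lim_{T \to \infty} T^{-d}\int_{Q(0,T)} f$ exists, and by the tower property $\int c \, d\mu = \delta$. Conversely, any $f \in X_A$ arises as a weak-* limit $f = \lim T_{v_n}\chi_A$, so $\int_{Q(0,T)} f \le \sup_x m(A \cap Q(x,T))$ for every $T$; dividing by $T^d$ and letting $T \to \infty$ gives $c(f) \le d^*(A) = \delta$ whenever the limit exists. These two bounds force $c(f) = \delta$ for $\mu$-a.e.\ $f$, and by comparing $Q(v,T)$ with $Q(0,T)$ at the boundary one further deduces that $T^{-d}\int_{Q(v,T)} f \to \delta$ as $T \to \infty$ for every $v \in \mathbb{R}^d$ on the same full-measure set.

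Fix such an $f$. Testing $R_t f$ against the indicator of a cube $Q(x_0,s)$ reduces to understanding $(ts)^{-d}\int_{Q(tx_0,ts)} f$; extending the preceding convergence to such cubes---whose centres grow linearly with $t$, which is the most delicate point---shows $R_t f \to \delta \mathbf{1}$ in the weak-* topology as $t \to \infty$, and so $R_t f \in \mathbb{U}$ for all $t$ above some threshold $T_0$. Since $R_t$ is weak-* continuous and $f$ lies in the weak-* closure of $\{T_u \chi_A\}$, the openness of $\mathbb{U}$ supplies, for each such $t$, some $u(t)$ with $R_t T_{u(t)}\chi_A = \chi_{t^{-1}(A-u(t))} \in \mathbb{U}$, and translation-invariance of $P$ then gives $P(t^{-1}A) = 1$. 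The key technical obstacle is the flagged upgrade from the a.e.\ identity $c(f) = \delta$ (an average over origin-centred cubes) to the genuine weak-* convergence $R_t f \to \delta \mathbf{1}$ (which involves averages over cubes whose centres track $tx_0$ for arbitrary $x_0$); this should follow either from a refined (multi-parameter or Lindenstrauss-tempered) ergodic theorem, or from a direct Banach-density pigeonhole over the extra spatial parameter.
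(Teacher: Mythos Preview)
Your overall architecture matches the paper's: construct a $\mathcal{T}$-invariant measure $\mu$ on the orbit closure $X_A$ with $\int\phi_0\,d\mu=\delta$, find a $\mu$-typical $h$ for which $R_t h\to\delta\mathbf{1}$, and then use openness of $\mathbb{U}$ and continuity of $R_t$ to pull this back to a translate of $\chi_A$.

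There is one genuine difference and one genuine gap.

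\textbf{The difference.} The paper passes through an \emph{ergodic} measure: it uses Choquet's theorem to replace the Krylov--Bogolioubov limit $\mu$ by an ergodic $\hat\mu\in\mathcal{M}_{\mathcal{T}}(X_A)$ still satisfying $\int\phi_0\,d\hat\mu=\delta$, after which the pointwise ergodic theorem gives a \emph{constant} limit directly. You avoid ergodic decomposition by a neat upper-bound argument: since every $f\in X_A$ is a weak-* limit of translates of $\chi_A$, one has $T^{-d}\int_{Q(0,T)}f\le\sup_x T^{-d}m(A\cap Q(x,T))$, hence $c(f)\le d^*(A)=\delta$; combined with $\int c\,d\mu=\delta$ this forces $c=\delta$ $\mu$-a.e. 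This shortcut is legitimate and slightly more elementary than the paper's route.

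\textbf{The gap.} The step you flag is indeed the crux, and it is not merely a technicality. Knowing $T^{-d}\int_{Q(0,T)}f\to\delta$ (and even $T^{-d}\int_{Q(v,T)}f\to\delta$ for each \emph{fixed} $v$) does \emph{not} by itself yield $R_t f\to\delta\mathbf{1}$, because testing against $\chi_{Q(x_0,s)}$ requires control of $(ts)^{-d}\int_{Q(tx_0,ts)}f$ with the centre escaping to infinity. No ``boundary comparison'' or ``Banach-density pigeonhole'' will bridge this: the cubes $Q(tx_0,ts)$ and $Q(0,ts)$ are eventually disjoint. The paper handles exactly this point by applying the Lindenstrauss pointwise ergodic theorem to the F{\o}lner sequence $F_n=nQ$ for \emph{each} rational cube $Q$, verifying the tempered condition $m\bigl(\bigcup_{k<n}F_k - F_n\bigr)\le C\,m(F_n)$, and showing that the resulting ergodic average coincides (up to a vanishing error) with $m(Q)^{-1}\int\chi_Q R_n f$. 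Countability of rational cubes and $L^1$-density of their spans then give $R_t f\to\delta\mathbf{1}$. In your non-ergodic setting the same argument works verbatim: for each rational $Q$ the Lindenstrauss theorem gives $\frac{1}{m(nQ)}\int_{nQ}\phi_0(T_v f)\,dv\to E[\phi_0\mid\mathcal{I}](f)=c(f)$ $\mu$-a.e., and you have already shown $c(f)=\delta$ a.e. So the missing ingredient is precisely the paper's Theorem~\ref{yamama}; your sketch is correct once that machinery is supplied, but the proof is incomplete as written.
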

Theorem \ref{FuKaWe} may easily be deduced from Theorem \ref{milpro} via the following proposition, which implies that the property of containing two points at Euclidean distance $1$ from one another is a mild property. Proposition \ref{open} is proved in \S\ref{dongle} below. Here and throughout the article we let $\sigma$ denote normalised one-dimensional Lebesgue measure on $S^1\subset\mathbb{R}^2$.
\begin{proposition}\label{open}
The set
\[\mathbb{U} :=\left\{g \in \mathbb{S}_2 \colon \iint g(x)g(x-y)d\sigma(y)dx>0\right\}\]
is open and contains $\delta \mathbf{1}$ for every $\delta \in (0,1]$.
\end{proposition}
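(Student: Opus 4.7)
The plan is to handle the two assertions separately. The containment $\delta\mathbf{1} \in \mathbb{U}$ is immediate: when $g$ is the constant function with value $\delta$, the integrand $g(x)g(x-y) = \delta^2$ is strictly positive everywhere, so the double integral evaluates to $\delta^2 \sigma(S^1) m(\mathbb{R}^2) = +\infty > 0$. The substance of the proposition lies in openness, and for that I plan to show that the non-negative functional
\[
F(g) := \iint g(x)g(x-y)\,d\sigma(y)\,dx = \int_{\mathbb{R}^2} g(x)(g * \sigma)(x)\,dx
\]
is weak-$*$ lower semicontinuous on $\mathbb{S}_2$; then $\mathbb{U} = \{F > 0\}$ is open.

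Since the integrand is non-negative, monotone convergence gives $F = \sup_W F_W$ where $W$ ranges over open cubes centred at the origin and
\[
F_W(g) := \int_W g(x)(g * \sigma)(x)\,dx.
\]
It therefore suffices to establish weak-$*$ continuity of each $F_W$, as the supremum of continuous functions is lower semicontinuous. The key ingredient is that convolution with $\sigma$ is a \emph{compact} operator $L^2(W') \to L^2(W)$, where $W'$ is a bounded neighbourhood of $W$ containing $W + S^1$. Here the smoothness of $\sigma$ as surface measure on a smooth curve is crucial: the classical stationary-phase bound $|\widehat\sigma(\xi)| \lesssim (1+|\xi|)^{-1/2}$ implies that convolution with $\sigma$ sends $L^2(\mathbb{R}^2)$ boundedly into $H^{1/2}(\mathbb{R}^2)$, and the Rellich-Kondrachov theorem then provides the compact embedding $H^{1/2}(W) \hookrightarrow L^2(W)$.

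Given a weak-$*$ convergent sequence $g_n \to g_0$ in $\mathbb{S}_2$, the uniform bound $|g_n| \le 1$ together with $L^2(W') \subset L^1(W')$ for bounded $W'$ yields weak convergence $g_n|_{W'} \rightharpoonup g_0|_{W'}$ in $L^2(W')$, and compactness upgrades this to strong $L^2(W)$ convergence $g_n * \sigma \to g_0 * \sigma$. Writing
\[
F_W(g_n) - F_W(g_0) = \int_W g_n \bigl((g_n - g_0) * \sigma\bigr)\,dx + \int_W (g_n - g_0)(g_0 * \sigma)\,dx,
\]
the first term is bounded in absolute value by $m(W)^{1/2}\|(g_n - g_0) * \sigma\|_{L^2(W)} \to 0$ via Cauchy-Schwarz, while the second vanishes by weak-$*$ convergence tested against the $L^1(\mathbb{R}^2)$ function $\chi_W \cdot (g_0 * \sigma)$. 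The main obstacle is the compactness step: $F_W$ is quadratic in $g$ and therefore not automatically weak-$*$ continuous, and the argument hinges on the genuine smoothing effect of convolution with the singular measure $\sigma$ in $\mathbb{R}^2$.
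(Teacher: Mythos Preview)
Your argument is correct. The overall architecture matches the paper's: both reduce openness to lower semicontinuity of the functional $g\mapsto\iint g(x)g(x-y)\,d\sigma(y)\,dx$, write this as an increasing supremum of localized functionals via monotone convergence, and then prove that each localized functional is weak-$*$ continuous. The localizations differ slightly (you restrict the $x$-integration to a cube $W$, the paper multiplies $g$ by $\chi_B$ in both arguments) but this is inessential.

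Where you genuinely diverge is in the continuity step. The paper applies Parseval directly to write $I_B(g)=\int|\widehat{\chi_B g}(\xi)|^2\hat\sigma(\|\xi\|)\,d\xi$, then uses pointwise convergence of $\widehat{\chi_B f_n}$ together with the decay $\hat\sigma(\|\xi\|)\to 0$ in a tail-splitting argument: on a large ball one invokes dominated convergence, and outside it the integral is controlled by $\varepsilon\|\chi_B f_n\|_2^2\le\varepsilon\,m(B)$. You instead package the same decay $|\hat\sigma(\xi)|\lesssim(1+|\xi|)^{-1/2}$ as the statement that convolution with $\sigma$ maps $L^2$ into $H^{1/2}$, and then appeal to Rellich--Kondrachov to obtain compactness of $h\mapsto(h*\sigma)|_W$ from $L^2(W')$ to $L^2(W)$; this compactness upgrades weak $L^2$ convergence of $g_n|_{W'}$ to norm convergence of $(g_n*\sigma)|_W$, after which the bilinear decomposition is routine. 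Your route is more conceptual and imports more machinery (fractional Sobolev spaces, Rellich), while the paper's is self-contained and hands-on; both rest on exactly the same Fourier decay of $\sigma$.
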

It is not difficult to see that $\delta$-mild properties are weaker than properties with supersaturable complement in several respects: for example, the property that $A$ has a translate $A+v$ such that $\frac{1}{4}<m((A+v) \cap [0,1]^d)<\frac{3}{4}$ is clearly $\frac{1}{2}$-mild --- for example, we could define $\mathbb{U}:=\{f \in \mathbb{S}_2 \colon \frac{1}{4}<\int_{[0,1]^2}f<\frac{3}{4}\}$ --- but does not satisfy Definition \ref{ssable}(ii). On the other hand, if $P$ is a property with supersaturable complement such that $\mathfrak{m}(P)<1$ then it is necessarily mild. Given $f \in \mathbb{S}_d$ and a property $P$ with supersaturable complement, let us say that $P(f)=1$ if for every Lebesgue measurable function $g \colon \mathbb{R}^d \to [0,1]$ which is almost everywhere equal to $f$ we have $P\left(\left\{x \colon g(x)>0\right\}\right)=1$. It follows from \cite[Lemma 12]{Bukh} that the set $\mathbb{U}:=\{f \in \mathbb{S}_d \colon P(f)=1\}$ is open, and by Definition \ref{ssable}(i) it follows that $\delta \mathbf{1}\in\mathbb{U}$ for every $\delta \in (0,1]$. Thus if a property has supersaturable complement then it is mild, but the converse implication is not true in general.

 The following application of Theorem \ref{milpro} gives a somewhat less contrived example of a property which is mild but does not have supersaturable complement:
\begin{theorem}\label{grah}
For every $v \in S^1\subset\mathbb{R}^2$ let $v^\perp$ denote the point on $S^1$ which lies anticlockwise from $v$ at a distance of one quarter-circle. For every $M>0$ the set of all $g \in \mathbb{S}_2$ such that the integral
\begin{equation}\label{wang'aa}\int_{\mathbb{R}^2}\int_{S^1} \int_0^\infty e^{-s}g(x)g(x+y)g\left(x+2\alpha y^\perp + s y\right)ds d\sigma(y) dx\end{equation}
is nonzero for every $\alpha \in (0,M]$ contains an open neighbourhood of $\delta\mathbf{1}$ for every $\delta \in (0,1]$. 
In particular, the property $P_M$ defined by $P_M(A)=1$ if and only if for each $\alpha \in (0,M]$ we may find points $x, y, z \in A$ which form the vertices of a triangle with area $\alpha$ and in which at least one side has length exactly $1$ is a mild property.
\end{theorem}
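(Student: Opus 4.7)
The plan is to exhibit, for each $\delta \in (0, 1]$, a single weak-$*$ open neighbourhood $\mathbb{U}$ of $\delta \mathbf{1}$ in $\mathbb{S}_2$ on which the integral \eqref{wang'aa} remains strictly positive for \emph{every} $\alpha \in (0, M]$ simultaneously, from which the ``in particular'' assertion about $P_M$ follows by reading off the three vertices. First I localise by fixing a nonnegative compactly supported function $\phi \colon \mathbb{R}^2 \to [0, 1]$ with $\int \phi \, dx = 1$ (for instance $\phi = \chi_{Q(0, 1)}$) and defining
\[F_{\alpha, \phi}(g) := \iiint e^{-s} \phi(x) g(x) g(x+y) g(x + 2\alpha y^\perp + sy) \, ds \, d\sigma(y) \, dx.\]
Then $F_{\alpha, \phi}(g) \le F_\alpha(g)$ (writing $F_\alpha$ for the integral in \eqref{wang'aa}), and a direct computation using $\int_0^\infty e^{-s} \, ds = \sigma(S^1) = \int \phi \, dx = 1$ gives $F_{\alpha, \phi}(\delta \mathbf{1}) = \delta^3$, independently of $\alpha \in [0, M]$.

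The heart of the proof is to show that $(g, \alpha) \mapsto F_{\alpha, \phi}(g)$ is continuous on $\mathbb{S}_2 \times [0, M]$, with $\mathbb{S}_2$ equipped with its weak-$*$ topology. Because $\phi$ has compact support and $e^{-s}$ decays in $s$, the integrand depends (up to a negligible tail) on the restriction of $g$ to a fixed bounded region $Q$; on $Q$, $\mathbb{S}_2$ is norm-bounded in $L^2$, and weak-$*$ convergence becomes weak $L^2(Q)$ convergence. Writing $F_{\alpha, \phi}(g) = \int \phi(x) g(x) G_\alpha(g)(x) \, dx$ with
\[G_\alpha(g)(x) := \int_{S^1} g(x+y) \int_0^\infty e^{-s} g(x + 2\alpha y^\perp + sy) \, ds \, d\sigma(y),\]
the strategy is to show that $G_{\alpha_n}(g_n) \to G_{\alpha_\infty}(g_\infty)$ strongly in $L^2(\supp \phi)$ whenever $g_n \to g_\infty$ weak-$*$ and $\alpha_n \to \alpha_\infty$; pairing against the weak-$L^2$-convergent factor $\phi g_n$ then gives joint continuity of $F_{\alpha, \phi}$. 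I would absorb the trilinear nonlinearity into two compact smoothing operators: convolution by the arc-length measure $\sigma$ on $S^1$ has Fourier transform with decay $|\hat\sigma(\xi)| = O(|\xi|^{-1/2})$ (Bessel asymptotics / stationary phase), so $g \mapsto g * \sigma$ is compact on $L^2(Q)$ by Rellich; and the $S^1$-average $K_\alpha := \int_{S^1} \nu_{y, \alpha} \, d\sigma(y)$ of the exponentially-damped ray measures $\nu_{y, \alpha}$ is an integrable radial function on $\mathbb{R}^2$ (the $(|v| - 2\alpha)^{-1/2}$ singularity at $|v| = 2\alpha$ being integrable against $r \, dr$), so by Riemann--Lebesgue $g \mapsto g * K_\alpha$ is also compact on $L^2(Q)$. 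Expanding the bilinear decomposition $G_\alpha(g_n) - G_\alpha(g_\infty) = G_\alpha(g_n - g_\infty, g_n) + G_\alpha(g_\infty, g_n - g_\infty)$ and rewriting each term via the change of variables $(y, s) \mapsto (x+y, x + 2\alpha y^\perp + sy)$, these compact operators convert the weak convergence of $g_n - g_\infty$ into the strong $L^2(Q)$ convergence needed; uniformity in $\alpha$ follows from the $L^1(\mathbb{R}^2)$-continuity of $K_\alpha$ in $\alpha \in [0, M]$ (including the limit $\alpha \to 0$).

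Given the joint continuity, the rest is soft. The product $\mathbb{S}_2 \times [0, M]$ is compact (Banach--Alaoglu combined with classical compactness), so the continuous function $F_{\alpha, \phi}$ attains the positive value $\delta^3$ on the compact slice $\{\delta \mathbf{1}\} \times [0, M]$. The tube lemma applied to the open set $\{(g, \alpha) : F_{\alpha, \phi}(g) > \delta^3/2\}$ produces a weak-$*$ open neighbourhood $\mathbb{U}$ of $\delta \mathbf{1}$ such that $F_\alpha(g) \ge F_{\alpha, \phi}(g) > \delta^3/2 > 0$ for every $g \in \mathbb{U}$ and every $\alpha \in (0, M]$, proving the first assertion. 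If moreover $\chi_A \in \mathbb{U}$, the positivity of the non-negative integrand in $F_\alpha(\chi_A)$ gives, for each $\alpha \in (0, M]$, a positive-measure set of $(x, y, s)$ with $x$, $x+y$, and $x + 2\alpha y^\perp + sy$ all in $A$; these three points form a triangle with one side of length $|y| = 1$ and area $\tfrac12 |\det(y, 2\alpha y^\perp + sy)| = \alpha$, so $P_M(A) = 1$, witnessing the $\delta$-mildness of $P_M$. The principal obstacle is the joint continuity claim: trilinear forms are generally not continuous in the weak-$*$ topology, and the argument relies crucially on the combined smoothing by $\sigma$ and $K_\alpha$ to absorb the nonlinearity into compact operators.
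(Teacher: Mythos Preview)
Your overall architecture is appealing and genuinely different from the paper's: you aim for full joint weak-$*$ continuity of $(g,\alpha)\mapsto F_{\alpha,\phi}(g)$ and then invoke the tube lemma, whereas the paper follows Bourgain and never proves continuity of the trilinear form at all. Instead the paper introduces Poisson smoothings $P_{\lambda_1},P_{\lambda_2}$, replaces the third factor $g(x+z)$ successively by $(g*P_{\lambda_1})(x+z)$, $(g*P_{\lambda_2})(x+z)$, and finally $(g*P_{\lambda_2})(x)$, bounding each discrepancy by $\tfrac{1}{7}\delta^3 m(B)$ uniformly in $\alpha$ via the decay estimate $\int_{S^1}|\hat\nu_y^\alpha(\xi)|\,d\sigma(y)\le\|\xi\|^{-1/2}$. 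The resulting $\alpha$-free expression $\mathbf{D}_4(g)=\int g\,(g*\sigma)\,(g*P_{\lambda_2})$ is then shown to be sequentially continuous at $\delta\chi_B$ by a Fourier argument analogous to Proposition~\ref{open}.

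The gap in your sketch is precisely at the step you flag as the principal obstacle. You claim that $G_\alpha(g_n)\to G_\alpha(g_\infty)$ strongly in $L^2(\supp\phi)$ and propose to handle the bilinear decomposition
\[
G_\alpha(g_n)-G_\alpha(g_\infty)=G_\alpha(g_n-g_\infty,\,g_n)+G_\alpha(g_\infty,\,g_n-g_\infty)
\]
via the compact operators $h\mapsto h*\sigma$ and $h\mapsto h*K_\alpha$. But $G_\alpha(h,g)(x)=\int_{S^1} h(x+y)\bigl(\int_0^\infty e^{-s}g(x+2\alpha y^\perp+sy)\,ds\bigr)\,d\sigma(y)$ is \emph{not} of the form $(h*\sigma)\cdot(\text{something})$ or $(\text{something})\cdot(g*K_\alpha)$: the two $g$-factors are coupled through the same $y$-variable, and your ``change of variables $(y,s)\mapsto(x+y,x+2\alpha y^\perp+sy)$'' does not decouple them. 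When $g_\infty=\delta\mathbf 1$ the two mixed terms do reduce to $\delta\,(h_n*\check\sigma)$ and $\delta\,(h_n*\check K_\alpha)$ and your compactness argument applies, but the remaining diagonal term $G_\alpha(h_n,h_n)$ (with $h_n=g_n-g_\infty$ weakly null but $|h_n|$ not) is not visibly controlled by either operator; the crude bound $|G_\alpha(h_n,h_n)|\le |h_n|*\sigma$ does not decay. This is exactly the quadratic interaction that Bourgain's two-scale Poisson smoothing is designed to absorb, and your sketch gives no mechanism for it. Without it, neither the strong convergence of $G_\alpha(g_n)$ nor the openness of $\{F_{\alpha,\phi}>\delta^3/2\}$ needed for the tube lemma is established.
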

The proof of Theorem \ref{grah} is given in \S\ref{try} below. The reader should not have difficulty in constructing for each sufficiently large $r>0$ a pair of measurable sets $A_1,A_2 \subset \mathbb{R}^2$ such that every point of $A_1$ is separated from every point of $A_2$ by at least distance $r$, and such that $A_1 \cup A_2$ has the property $P_1$ defined in Theorem \ref{grah} but each of $A_1$ and $A_2$ individually does not. This shows that the property described in Theorem \ref{grah} does not have supersaturable complement since it does not satisfy Definition \ref{ssable}(v). 

 In addition to generalising Theorem \ref{FuKaWe} and illustrating the separation between mild properties and properties with supersaturable complement, this result yields the following direct corollary:
\begin{corollary}\label{ggg}
Let $A \subseteq \mathbb{R}^2$ be a Lebesgue measurable set such that $d^*(A)>0$, and let $\alpha$ be any positive real number. Then there exist points $x,y,z \in A$ which form the vertices of a triangle of area $\alpha$. 
\end{corollary}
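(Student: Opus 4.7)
The plan is to deduce Corollary \ref{ggg} directly from the combination of Theorem \ref{milpro} and Theorem \ref{grah}, the point being that the parameter $\alpha$ in the conclusion can be absorbed into the scale parameter $t$ via rescaling. Let $\delta := d^*(A)>0$, and fix any convenient value of the parameter in Theorem \ref{grah}, say $M := 1$. Since $P_M$ is mild by Theorem \ref{grah}, it is in particular $\delta$-mild, and Theorem \ref{milpro} therefore yields $P_M(t^{-1}A)=1$ for all sufficiently large real numbers $t$.

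Next I would unpack what this conclusion means. For every such $t$, and for each $\beta \in (0,1]$, the definition of $P_M$ produces points $x',y',z'\in t^{-1}A$ which form the vertices of a triangle of area $\beta$ with at least one side of length exactly $1$. Rescaling by the factor $t$, the points $tx',ty',tz'\in A$ form the vertices of a triangle of area $t^2\beta$ (with at least one side of length $t$).

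To hit the prescribed area $\alpha$, I would choose $t$ sufficiently large both to guarantee $P_M(t^{-1}A)=1$ and to ensure $t^2 \geq \alpha$. Setting $\beta := \alpha/t^2 \in (0,1]$ and applying the previous step then produces three points of $A$ forming a triangle of area $t^2\beta=\alpha$, which is the required configuration.

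There is no real obstacle here: the proof amounts to a routine scaling computation, since the combinatorial content has already been packaged into Theorem \ref{grah} (providing the mild property whose open neighbourhood of $\delta\mathbf{1}$ witnesses the existence of triangles of every area in $(0,M]$ with a side of length $1$) and Theorem \ref{milpro} (promoting this to a statement about every positive-density measurable set at all sufficiently large scales).
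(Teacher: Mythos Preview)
Your argument is correct and is precisely the intended derivation: the paper presents Corollary \ref{ggg} as a direct consequence of Theorem \ref{grah} (together with Theorem \ref{milpro}) without writing out the details, and the scaling computation you supply---choosing $t$ large enough that $P_M(t^{-1}A)=1$ and $t^2\geq\alpha$, then taking $\beta=\alpha/t^2$---is exactly the expected way to fill the gap.
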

A proof of Corollary \ref{ggg} based on Szemer\'edi's theorem was previously given by R. L. Graham \cite{G}. Graham's result also has the stronger feature that the vectors $x-z$ and $y-z$ may be chosen parallel to the co-ordinate axes.

\section{Dynamical formulation and technical results}

In the tradition of earlier ergodic-theoretic investigations of translation-invariant combinatorial structures our proof of Theorem \ref{milpro} operates by investigating the translation dynamics on a phase space comprising a compactification of the set of indicator functions of the sets of interest. In Furstenberg, Katznelson and Weiss' investigations of subsets of $\mathbb{Z}^d$ (see for example \cite{Fu77,FuKa78,FuWe78}) the set of all indicator functions $\mathbb{Z}^d \to \{0,1\}$ is already compact in the infinite product topology on $\{0,1\}^{\mathbb{Z}^d}$ and so no enlargement of this space of functions is necessary. Furstenberg, Katznelson and Weiss' investigation of subsets of $\mathbb{R}^d$, on the other hand, substitutes for the indicator function $\mathbb{R}^d \to \{0,1\}$ of a measurable set a Lipschitz continuous function $\mathbb{R}^d \to [0,1]$ given by the distance to the closure of the set in question, and equips the set of such functions with the compact-uniform topology. In this article we take the alternative approach of granting the set of measurable functions $\mathbb{R}^d \to \{0,1\}$ the topology which it inherits as a subset of $L^\infty(\mathbb{R}^d) \simeq L^1(\mathbb{R}^d)^*$ in the weak-* topology. (As Furstenberg, Katznelson and Weiss' approach to subsets of $\mathbb{R}^d$ does not distinguish between sets with different closures, so our approach does not distinguish between sets which agree up to measure zero.) We remark that in the context of functions $\mathbb{Z}^d \to \{0,1\}$ these two approaches would be indistinguishable, since in that environment the infinite product topology, the compact-uniform topology and the weak-* topology inherited from $L^1(\mathbb{Z}^d)^*$ are all coincident.

 For each  $v \in \mathbb{R}^d$ we define a function $\mathcal{T}_v \colon \mathbb{S}_d \to \mathbb{S}_d$ by $(\mathcal{T}_{v}f)(x)=f(x+v)$, and for each $t>0$ we also define a map $\mathcal{Z}_t \colon \mathbb{S}_d \to \mathbb{S}_d$ by $\left(\mathcal{Z}_t f\right)(x)=f(t x)$. It is not difficult to verify that each $\mathcal{T}_v$ and each $\mathcal{Z}_t$ is a homeomorphism, that $v \mapsto \mathcal{T}_v$ is an action of $\mathbb{R}^d$, that $\mathcal{Z}_t\mathcal{T}_v=\mathcal{T}_{tv}\mathcal{Z}_t$ for every $t$ and $v$, and that $\mathcal{T}_{v_n}$ converges uniformly to $\mathcal{T}_v$ in the limit as $v_n \to v$. We denote the collection of maps $\mathcal{T}_v$ simply by $\mathcal{T}$, and say that a set $B \subset \mathbb{S}_d$ is \emph{$\mathcal{T}$-invariant} if $\mathcal{T}_vB=B$ for all $v \in \mathbb{R}^d$. Finally, let us define the \emph{upper Banach density} of a function $f \in \mathbb{S}$ to be the quantity
\[d^*(f):=\limsup_{t \to \infty} \sup_{v \in \mathbb{R}^d} \frac{1}{m(Q(v,t))}\int_{Q(v,t)}f(x)\,dx\]
which is of course analogous to the upper Banach density of a set: if $f=\chi_A$ for some Lebesgue measurable set $A \subseteq \mathbb{R}^d$ then $d^*(A)=d^*(\chi_A)$.  Theorem \ref{milpro} is a corollary of the following:
\begin{theorem}\label{mmaaiinn}
Let $f \in \mathbb{S}_d$ and let $\mathbb{V} \subset \mathbb{S}_d$ be a $\mathcal{T}$-invariant open set which contains the constant function $d^*(f)\mathbf{1}$. Then $\mathcal{Z}_tf \in \mathbb{V}$ for all sufficiently large $t$.
\end{theorem}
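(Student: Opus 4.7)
Let $\delta := d^*(f)$. My plan is to use the pointwise ergodic theorem for the translation action $\mathcal{T}$ on a suitable $\mathcal{T}$-invariant probability measure on the orbit closure $X := \overline{\{\mathcal{T}_v f : v \in \mathbb{R}^d\}}$ in order to produce a point $h^* \in X$ whose zoom-outs $\mathcal{Z}_t h^*$ converge weak-* to $\delta\mathbf{1}$; the $\mathcal{T}$-invariance of $\mathbb{V}$ will then transfer the conclusion back to $\mathcal{Z}_t f$ itself.

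I would first construct a $\mathcal{T}$-invariant Borel probability measure $\mu$ on $X$ whose mean value equals $\delta$. By the definition of upper Banach density, choose $s_k \to \infty$ and $v_k \in \mathbb{R}^d$ with $m(Q(v_k, s_k))^{-1} \int_{Q(v_k, s_k)} f \to \delta$, and define probability measures on $X$ by $\int \phi\,d\mu_k := m(Q(v_k, s_k))^{-1} \int_{Q(v_k, s_k)} \phi(\mathcal{T}_u f)\,du$. Since the cubes are a F\o{}lner sequence in $\mathbb{R}^d$, any weak-* cluster point $\mu$ of $(\mu_k)$ is $\mathcal{T}$-invariant. Testing $\mu$ against the continuous linear functional $\phi_0(h) := \int_{Q(0, 1)} h$ yields $\int \phi_0\,d\mu = \delta$, and $\mathcal{T}$-invariance combined with Fubini forces the a.e.\ constant function $x \mapsto \int h(x)\,d\mu(h)$ to equal $\delta$ everywhere. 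The upper bound $m(Q(v, s))^{-1}\int_{Q(v, s)} h \leq \sup_u m(Q(u, s))^{-1}\int_{Q(u, s)} f \to \delta$ (valid for every $h \in X$ as $s \to \infty$) combined with Fubini also shows that every $\mathcal{T}$-invariant probability on $X$ has mean at most $\delta$, so by the ergodic decomposition I may replace $\mu$ by an ergodic component, still of mean $\delta$.

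For each fixed $v \in \mathbb{R}^d$ and $\lambda > 0$ the cubes $A_n := Q(nv, n\lambda)$ form a F\o{}lner sequence in $\mathbb{R}^d$ satisfying Tempelman's condition $m(A_n - A_n) \leq 2^d\,m(A_n)$, so Tempelman's pointwise ergodic theorem applies to the ergodic system $(X, \mu, \mathcal{T})$ and gives, for $\mu$-a.e.\ $h \in X$,
\[ \frac{1}{m(A_n)}\int_{A_n}\phi_0(\mathcal{T}_u h)\,du \longrightarrow \delta. \]
A short Fubini calculation identifies the left-hand side, modulo a boundary error of order $1/n$, with $\lambda^{-d}\int_{Q(v, \lambda)} \mathcal{Z}_n h$. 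Intersecting the $\mu$-null exceptional sets over $(v, \lambda)$ in a countable dense subset of $\mathbb{R}^d \times (0, \infty)$, approximating an arbitrary $\psi \in L^1(\mathbb{R}^d)$ in norm by finite linear combinations of indicators of rational cubes (using $\|\mathcal{Z}_t h\|_\infty \leq 1$ to control errors uniformly), and finally invoking the weak-* continuity of $t \mapsto \mathcal{Z}_t$ to pass from integer $n$ to real $t$, I conclude that for $\mu$-a.e.\ $h \in X$ one has $\mathcal{Z}_t h \to \delta\mathbf{1}$ in the weak-* topology as $t \to \infty$.

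Fix any such $h^*$ and pick $w_n \in \mathbb{R}^d$ with $\mathcal{T}_{w_n} f \to h^*$. The commutation relation between $\mathcal{T}$ and $\mathcal{Z}_t$ recorded just before the theorem shows that $\mathcal{Z}_t \mathcal{T}_{w_n} f$ is a $\mathcal{T}$-translate of $\mathcal{Z}_t f$ for each $n$, so by the weak-* continuity of $\mathcal{Z}_t$ the function $\mathcal{Z}_t h^* = \lim_n \mathcal{Z}_t \mathcal{T}_{w_n} f$ lies in the weak-* closure of the $\mathcal{T}$-orbit of $\mathcal{Z}_t f$. For every sufficiently large $t$ we have $\mathcal{Z}_t h^* \in \mathbb{V}$, because $\mathbb{V}$ is open and contains $\delta\mathbf{1}$; the openness of $\mathbb{V}$ then forces the $\mathcal{T}$-orbit of $\mathcal{Z}_t f$ itself to meet $\mathbb{V}$, and the $\mathcal{T}$-invariance of $\mathbb{V}$ finally delivers $\mathcal{Z}_t f \in \mathbb{V}$. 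I expect the hardest technical step to be the third paragraph: promoting the pointwise ergodic theorem, which natively delivers convergence for one F\o{}lner sequence and one test function at a time, into the statement that $\mathcal{Z}_t h \to \delta\mathbf{1}$ weak-* for a single $\mu$-generic $h$ and all $L^1$ test functions uniformly.
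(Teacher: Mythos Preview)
Your proposal is correct and follows essentially the same route as the paper: build an ergodic $\mathcal{T}$-invariant measure on $X_f$ with mean $d^*(f)$ (the paper's Theorem~\ref{yapapa}), apply a pointwise ergodic theorem to deduce $\mathcal{Z}_t h \to d^*(f)\mathbf{1}$ for a.e.\ $h$ (the paper's Theorem~\ref{yamama}), and then transfer to $f$ via the openness and $\mathcal{T}$-invariance of $\mathbb{V}$ exactly as you do in your final paragraph. One technical caution: the classical Tempelman theorem requires the F{\o}lner sets to be nested and to contain the identity, which your cubes $A_n=Q(nv,n\lambda)$ generally do not; the paper avoids this by invoking Lindenstrauss' theorem instead, and its remark after the proof of Theorem~\ref{yamama} notes that if one insists on Tempelman one must first write $\chi_{nQ}$ as a signed sum of $2^d$ characteristic functions of origin-cornered cuboids.
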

To obtain Theorem \ref{milpro} for a given $\delta$-mild property $P$ it suffices to apply Theorem \ref{mmaaiinn} to the set $\mathbb{V}:=\bigcup_{v \in \mathbb{R}^d} \mathcal{T}_v\mathbb{U}$ and function $f:=\chi_A$, noticing that $\mathcal{Z}_t\chi_A=\chi_{t^{-1}A}$ for all $t>0$. We derive Theorem \ref{mmaaiinn} using a dynamical argument in two parts: the first part characterises the density of a set $f$ in terms of space averages over $\mathbb{S}_d$ with respect to translation-invariant measures, and the second shows that with respect to any ergodic translation-invariant measure, almost every element of $\mathbb{S}_d$ is approximately constant at large scales in a precise sense.
 In order to describe these results we require some further definitions. Let us use the symbol $\mathcal{M}$ to denote the set of all Borel probability measures on $\mathbb{S}_d$, which we equip with the weak-* topology arising from that set's identification with a subset of $C(\mathbb{S}_d)^*$ via the Riesz representation theorem. It follows from the Banach-Alaoglu theorem and the separability of $C(\mathbb{S}_d)$ that $\mathcal{M}$ is compact and metrisable in this topology. 
We let $\mathcal{M}_{\mathcal{T}}$ denote the set of all $\mathcal{T}$-invariant Borel probability measures on $\mathbb{S}_d$, which is a nonempty closed subset of $\mathcal{M}$. The following theorem characterises the density $d^*(f)$ in terms of translation-invariant measures:
\begin{theorem}\label{yapapa}
Let $f \in \mathbb{S}_d$ and define $X_f:=\overline{\left\{\mathcal{T}_vf \colon v \in \mathbb{R}^d\right\}}$.
 Then
\begin{equation}\label{beh}d^*(f)=\sup\left\{\iint_{[0,1]^d}h(x)\,d\mu(h)\colon \mu \in \mathcal{M}_{\mathcal{T}}\,\text{ and }\mu\left(X_f\right)=1\right\}\end{equation}
and this supremum is attained by an ergodic measure.
\end{theorem}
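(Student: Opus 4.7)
My plan is to prove \eqref{beh} in two directions, then extract an ergodic optimizer by ergodic decomposition. Throughout I would work with the functional $\Phi \colon \mathbb{S}_d \to [0,1]$ defined by $\Phi(h) := \int_{[0,1]^d} h(x)\,dx$, which is weak-$*$ continuous on $\mathbb{S}_d$, so that the integral appearing in \eqref{beh} is simply $\int_{\mathbb{S}_d} \Phi\,d\mu$.

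For the ``$\leq$'' direction, fix $\mu \in \mathcal{M}_{\mathcal{T}}$ with $\mu(X_f)=1$ and apply Wiener's pointwise ergodic theorem for the $\mathbb{R}^d$-action $\mathcal{T}$ to the bounded continuous observable $\Phi$: for $\mu$-a.e.\ $h$, the averages $\frac{1}{m(Q(0,t))}\int_{Q(0,t)}\Phi(\mathcal{T}_x h)\,dx$ converge to some $\tilde\Phi(h)$, and $\int\tilde\Phi\,d\mu=\int\Phi\,d\mu$ by bounded convergence. A Fubini computation using $\Phi(\mathcal{T}_x h)=\int_{[0,1]^d}h(x+y)\,dy$ then shows that this ergodic average differs from the genuine spatial mean $\frac{1}{m(Q(0,t))}\int_{Q(0,t)}h(z)\,dz$ only by a boundary-layer error of order $O(1/t)$, uniformly in $h$. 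For $h \in X_f$, any weak-$*$ convergent sequence $\mathcal{T}_{v_n}f\to h$ gives $\int_{Q(0,t)}h = \lim_n \int_{Q(v_n,t)}f$, which by definition of $d^*(f)$ is at most $m(Q(0,t))(d^*(f)+\varepsilon)$ once $t$ is large enough. Hence $\tilde\Phi(h)\leq d^*(f)$ for every $h \in X_f$, and so $\int\Phi\,d\mu\leq d^*(f)$.

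For the ``$\geq$'' direction, I would use a Krylov--Bogolyubov construction. Choose $t_n\to\infty$ and $v_n \in \mathbb{R}^d$ with $\frac{1}{m(Q(v_n,t_n))}\int_{Q(v_n,t_n)}f\,dx \to d^*(f)$ and set $\mu_n := \frac{1}{m(Q(v_n,t_n))}\int_{Q(v_n,t_n)}\delta_{\mathcal{T}_x f}\,dx$. Each $\mu_n$ is supported in $\{\mathcal{T}_x f : x \in \mathbb{R}^d\} \subseteq X_f$, and $\int\Phi\,d\mu_n \to d^*(f)$ by Fubini up to a vanishing boundary term. Approximate $\mathcal{T}$-invariance of $\mu_n$ follows from the fact that $m(Q(v_n,t_n) \triangle (Q(v_n,t_n)+v)) = o(m(Q(v_n,t_n)))$ for each fixed $v$, combined with the equicontinuity of $v\mapsto \mathcal{T}_v$ noted in the paper. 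By compactness of $\mathcal{M}$, extract a subsequential weak-$*$ limit $\mu\in\mathcal{M}_{\mathcal{T}}$; since $X_f$ is closed one has $\mu(X_f)=1$ and $\int\Phi\,d\mu=d^*(f)$. To obtain an ergodic maximizer, apply the ergodic decomposition $\mu=\int\mu_\omega\,dP(\omega)$: since $\mu(X_f)=1$ one has $\mu_\omega(X_f)=1$ for $P$-a.e.\ $\omega$, and the already-established upper bound gives $\int\Phi\,d\mu_\omega\leq d^*(f)$ for each such $\omega$; the identity $\int\int\Phi\,d\mu_\omega\,dP(\omega)=d^*(f)$ then forces equality on a set of positive $P$-measure, producing an ergodic measure attaining the supremum.

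The principal technical obstacle is the delicate identification between the ergodic time-average of $\Phi$ and the genuine spatial mean of $h$: the former convolves $h$ with $\chi_{[0,1]^d}$ before averaging, so a uniform-in-$h$ boundary-layer estimate is needed in order to conclude that $\tilde\Phi(h)$ coincides with $\lim_{t\to\infty}\frac{1}{m(Q(0,t))}\int_{Q(0,t)}h$ whenever either limit exists. A secondary point is verifying approximate $\mathcal{T}$-invariance of the empirical measures $\mu_n$, but this follows straightforwardly from the Følner property of the family $\{Q(v_n,t_n)\}$ together with the joint continuity of the action.
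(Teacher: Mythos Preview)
Your proposal is correct, and the Krylov--Bogolyubov half (your second paragraph) matches the paper's argument essentially line for line: form empirical measures along a maximising sequence of cubes, pass to a weak-$*$ limit, check invariance via the F{\o}lner property, and conclude that $\int\Phi\,d\mu\geq d^*(f)$ for the limit measure. Your extraction of an ergodic optimiser by ergodic decomposition is equivalent to the paper's use of Choquet's theorem on the simplex $\mathcal{M}_{\mathcal{T}}(X_f)$.

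The one direction where you take a genuinely different route is the bound $\int\Phi\,d\mu\leq d^*(f)$ for arbitrary $\mu\in\mathcal{M}_{\mathcal{T}}(X_f)$. The paper first reduces to ergodic $\hat\mu$ via the Choquet replacement, and then invokes Theorem~\ref{yamama} to find a single $g\in X_f$ with $\lim_{t\to\infty}t^{-d}\int_{[0,t]^d}g=\int\Phi\,d\hat\mu$, feeding this back into the identity $d^*(f)=\limsup_{t\to\infty}\sup_{g\in X_f}t^{-d}\int_{[0,t]^d}g$. You instead bound the spatial limsup of \emph{every} $h\in X_f$ directly, by approximating $\int_{Q(0,t)}h$ by $\int_{Q(v_n,t)}f$ along an approximating orbit sequence and using that $\sup_x t^{-d}\int_{Q(x,t)}f\leq d^*(f)+\varepsilon$ for all large $t$; then the pointwise ergodic theorem (even the mean ergodic theorem, or simply $\mathcal{T}$-invariance of $\mu$) plus your boundary-layer estimate gives $\int\Phi\,d\mu\leq d^*(f)$ without ever passing to an ergodic component. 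Your route is more self-contained for this theorem in isolation --- it does not presuppose Theorem~\ref{yamama} --- whereas the paper's route is natural in context because Theorem~\ref{yamama} has already been proved and is the main engine of the paper anyway.

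One small cosmetic point: your labels ``$\leq$'' and ``$\geq$'' are interchanged relative to the displayed equation $d^*(f)=\sup\{\cdots\}$; your first paragraph proves $\sup\leq d^*(f)$ and your second proves $d^*(f)\leq\sup$.
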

The observation that the upper density of a function (or rather, the characteristic function of a set) is positive if and only if an associated ergodic average is positive for at least one ergodic measure is a staple of ergodic Ramsey theory and arises in numerous works on the topic such as \cite{BeLe96,Fu77,Fu81}. The fact that the upper Banach density is exactly characterised by a supremum over ergodic measures in the manner of Theorem \ref{yapapa} seems to be relatively unremarked, though we are aware of \cite[Lemma 1]{Pe88}. 
In the immediate context of subsets of $\mathbb{R}^d$ the above result is not dissimilar to \cite[Lemma 2.1]{FKW}, although that result is prevented from being an equation by the possibility that a measurable set may have strictly lower density than its closure.

The following result relating a space average of an invariant measure $\mu$ to the behaviour of $\mu$-typical elements at large scales is a straightforward consequence of the pointwise ergodic theorem applied to the dynamics of the action $\mathcal{T}$ on the phase space $\mathbb{S}_d$.
\begin{theorem}\label{yamama}
Let $\mu \in \mathcal{M}_{\mathcal{T}}$ be an ergodic measure. Then
\[\mu\left(\left\{f \in \mathbb{S}_d \colon \lim_{t \to \infty}\mathcal{Z}_tf = \left(\iint_{[0,1]^d}h(x)dx \,d\mu(h)\right)\cdot \mathbf{1} \right\}\right)=1.\]
\end{theorem}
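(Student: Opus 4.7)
The plan is to deduce Theorem \ref{yamama} from the pointwise ergodic theorem for the $\mathbb{R}^d$-action $\mathcal{T}$ on $(\mathbb{S}_d,\mu)$, combined with a separability argument that converts weak-$*$ convergence into a countable collection of scalar convergences. First I identify the constant: for every $\phi\in L^1(\mathbb{R}^d)$ the map $h\mapsto\int\phi h$ is weak-$*$ continuous and bounded by $\|\phi\|_1$ on $\mathbb{S}_d$, so the functional $\Lambda(\phi):=\iint\phi(x)h(x)\,dx\,d\mu(h)$ is a bounded linear functional on $L^1(\mathbb{R}^d)$, and the $\mathcal{T}$-invariance of $\mu$ makes it invariant under the shift $\phi\mapsto\phi(\cdot-v)$ for every $v\in\mathbb{R}^d$. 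Any such bounded translation-invariant functional on $L^1(\mathbb{R}^d)$ equals $c\int\phi$ for a single constant $c$, and taking $\phi=\chi_{[0,1]^d}$ identifies $c$ with $\iint_{[0,1]^d}h\,dx\,d\mu(h)$, exactly the scalar in the statement.

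Since $(\mathbb{S}_d,\text{weak-}*)$ is metrisable and $L^1(\mathbb{R}^d)$ is separable, to show $\mathcal{Z}_tf\to c\mathbf{1}$ in weak-$*$ for $\mu$-a.e.\ $f$ it is enough to show that for every $\phi$ in a countable family whose linear span is dense in $L^1$ we have $\int\phi(\mathcal{Z}_tf)\to c\int\phi$ for $\mu$-a.e.\ $f$. I take the family $\{\chi_{Q(v_0,s)}\colon v_0\in\mathbb{Q}^d,\,s\in\mathbb{Q}_{>0}\}$. A change of variables gives
\[\int\chi_{Q(v_0,s)}(\mathcal{Z}_tf)(x)\,dx\;=\;t^{-d}\int_{tQ(v_0,s)}f(y)\,dy\;=\;s^d\cdot\frac{1}{m(tQ(v_0,s))}\int_{tQ(v_0,s)}f(y)\,dy,\]
so the task reduces, for each rational pair $(v_0,s)$, to showing that the average of $f$ over the dilated cube $tQ(v_0,s)$ tends to $c$ as $t\to\infty$ for $\mu$-a.e.\ $f$; the full-measure set in the theorem is then the intersection of the resulting countably many null sets.

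To obtain this cube-average convergence I apply the pointwise ergodic theorem along the dilation sequence $\{tQ(v_0,s)\}_{t\ge 1}$, a sequence of dilates of a fixed bounded convex body, for which Tempelman's extension of Wiener's theorem is available. Applied to the continuous bounded function $F_\eta(h):=\int\eta(y)h(y)\,dy$, where $\eta\ge 0$ is a compactly supported bump with $\int\eta=1$, the ergodic theorem gives, for $\mu$-a.e.\ $f$,
\[\frac{1}{m(tQ(v_0,s))}\int_{tQ(v_0,s)}F_\eta(\mathcal{T}_vf)\,dv\;\longrightarrow\;\int F_\eta\,d\mu\;=\;c\int\eta\;=\;c.\]
A Fubini swap rewrites the left-hand side as $m(tQ)^{-1}\int\eta(y)\int_{y+tQ}f(z)\,dz\,dy$, and since $(y+tQ)\triangle tQ$ has measure $O((ts)^{d-1})$ uniformly over $y$ in the support of $\eta$, this differs from $m(tQ)^{-1}\int_{tQ}f$ by $O((ts)^{-1})$. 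Passing to the limit therefore gives $m(tQ)^{-1}\int_{tQ}f(z)\,dz\to c$, as required.

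The main obstacle is that Wiener's classical pointwise ergodic theorem is formulated for averages over cubes $Q(0,R)$ centred at the origin, whereas here we require averages over the dilated cubes $tQ(v_0,s)=Q(tv_0,ts)$ whose centres move with $t$. Invoking Tempelman's theorem for dilates of a convex body handles this cleanly; alternatively one can pass from Wiener's centred-cube result to translates $v_0+Q(0,R)$ (for each fixed rational $v_0$) using the $\mathcal{T}$-invariance of $\mu$ and take a countable intersection, then control the moving-centre averages through the sandwich $tQ(v_0,s)\subset Q(0,t(s+2|v_0|_\infty))$ combined with the maximal inequality for the $\mathcal{T}$-action. Either route produces the exceptional null set needed in the theorem.
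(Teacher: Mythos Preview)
Your proof is correct and follows essentially the same route as the paper's: reduce weak-$*$ convergence to a countable family of scalar convergences indexed by rational cubes, apply a pointwise ergodic theorem for the $\mathbb{R}^d$-action to a continuous local functional on $\mathbb{S}_d$ (your $F_\eta$ plays the role of the paper's $\Phi(h)=\int_{[0,1]^d}h$), and match the resulting ergodic average to $m(tQ)^{-1}\int_{tQ}f$ via a Fubini/boundary estimate. The only substantive difference is the ergodic theorem invoked: the paper uses Lindenstrauss' theorem, which applies directly to the sequence $(nQ)$ for an arbitrary rational cube $Q$, and explicitly remarks afterward that the Tempelman/Wiener route you prefer needs the extra step of writing $\chi_{nQ}$ as a signed sum of characteristic functions of cuboids with one vertex at the origin (since Tempelman requires nested sets containing $0$); your final-paragraph sandwich/maximal-inequality workaround does not quite do this job, but the orthant decomposition in the paper's remark does.
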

Let us briefly indicate the derivation of Theorem \ref{mmaaiinn} from Theorems \ref{yapapa} and \ref{yamama}. Let $\mathbb{U}$, $\delta$ and $f$ be given. By Theorem \ref{yapapa} there exists an ergodic measure $\mu$ such that $\mu(X_f)=1$ and $\iint_{[0,1]^d}h(x)\,d\mu(h)=d^*(f)$. It follows from Theorem \ref{yamama} that there exists $h \in X_f$ such that $\lim_{t \to \infty} \mathcal{Z}_th =d^*(f)\mathbf{1}$. Since $d^*(f)\mathbf{1}$ belongs to the open set $\mathbb{U}$ we have $\mathcal{Z}_th \in \mathbb{U}$ for all sufficiently large $t$. Since $\mathbb{U}$ is open and $h \in X_f$ it follows that for each such $t$ there exists $v \in \mathbb{R}^d$ such that $ \mathcal{Z}_t\mathcal{T}_vf\in \mathbb{U}$, and since $\mathcal{T}_{tv}\mathcal{Z}_tf=\mathcal{Z}_t\mathcal{T}_vf$ and $\mathcal{T}_{tv}^{-1}\mathbb{U}=\mathbb{U}$ it follows that $\mathcal{Z}_tf \in \mathbb{U}$ for all such $t$ as required.

The following two sections comprise the proofs of Theorems \ref{yamama} and \ref{yapapa}, and the two sections subsequent to those contain the proofs of Proposition \ref{open} and Theorem \ref{grah}. Throughout these sections we will frequently use Tonelli and Fubini's theorems without comment.

\section{Proof of Theorem \ref{yamama}}

While it is perhaps more natural to state Theorem \ref{yapapa} before Theorem \ref{yamama}, the proof of the former requires the latter so we shall prove the second theorem first. The following general ergodic theorem due to E. Lindenstrauss \cite{Lin} is convenient for our argument (though see remark below). 
Here and throughout, the expression $A \triangle B$ denotes the symmetric set difference $(A \setminus B) \cup (B \setminus A)$.
\begin{theorem}[Lindenstrauss]\label{dougie}
Suppose that $\Gamma$ is a locally compact, second countable amenable group which acts bi-measurably on the left on a probability  space $(X,\mathcal{B},\mu)$ by measure-preserving transformations, and let $m$ denote Haar measure on $\Gamma$. Suppose that $(F_n)$ is a sequence of compact subsets of $\Gamma$ with the following two properties: firstly, for every nonempty compact set $K \subset \Gamma$
\[\lim_{n \to \infty} \frac{m(F_n \triangle KF_n)}{m(F_n)}=0,\]
and secondly, there exists a constant $C>0$ such that for all $n \geq 2$
\[m\left(F_n^{-1} \bigcup_{i=1}^{n-1}F_i\right) \leq Cm(F_n).\]
Suppose finally that the action of $\Gamma$ is ergodic. Then for every $f \in L^1(\mu)$ 
\[\mu\left(\left\{x \in X \colon \lim_{n \to \infty} \frac{1}{m(F_n)}\int_{F_n}f(gx) dm(g) = \int f\,d\mu\right\}\right)=1.\]
\end{theorem}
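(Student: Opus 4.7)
The plan is to follow the classical three-step architecture for pointwise ergodic theorems: (a) establish a weak-type $(1,1)$ maximal inequality for the averages $A_n f(x) := m(F_n)^{-1}\int_{F_n} f(gx)\,dm(g)$; (b) verify almost-everywhere convergence to $\int f\,d\mu$ on a dense subclass of $L^1(\mu)$; and (c) combine (a) and (b) via a Banach principle to obtain convergence throughout $L^1(\mu)$.

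Step (a) is the heart of the matter. One needs a Vitali-type covering lemma valid in a general amenable group: given a finite family of translates $g_i F_{n_i}$, one must extract a pairwise disjoint subfamily covering a definite fraction of the union. Naive greedy selection fails because the $F_n$ are not ``balls'' in any metric sense, and this is precisely where the tempered hypothesis $m(F_n^{-1}\bigcup_{i<n}F_i) \leq Cm(F_n)$ enters: the key innovation of Lindenstrauss is to replace deterministic Vitali selection by a randomised procedure, in which candidate sets are accepted or rejected according to independent coin flips with success probability controlled by the tempered condition. The resulting covering lemma then transfers, via averaging over a large Følner set in $\Gamma$, to yield a weak-type bound for the maximal operator $f \mapsto \sup_n |A_n f|$ on $(X,\mu)$.

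For step (b), ergodicity combined with the Følner property gives the mean ergodic theorem, so that $A_n f \to \int f\,d\mu$ in $L^2(\mu)$ for $f \in L^2(\mu)$. Passing to a subsequence converging almost everywhere and combining with the maximal inequality upgrades this to almost-everywhere convergence for a dense subclass of $L^1(\mu)$, for example the bounded functions. Step (c) is then the standard observation that the set of $f \in L^1(\mu)$ for which $A_n f \to \int f\,d\mu$ almost everywhere is closed in $L^1$-norm by the maximal inequality, so it must equal all of $L^1(\mu)$. The main obstacle is clearly the probabilistic covering lemma of step (a); the remaining ingredients are essentially routine adaptations of the classical $\mathbb{Z}$-action argument. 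Since this is a substantial proof on its own, in practice one will simply quote the theorem from \cite{Lin} rather than reproduce it here.
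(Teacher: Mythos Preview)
Your sketch accurately outlines the architecture of Lindenstrauss' original proof in \cite{Lin}, and your final sentence is exactly right: the paper does not prove this theorem at all. It is stated as a quoted result (``The following general ergodic theorem due to E. Lindenstrauss \cite{Lin} is convenient for our argument'') and used as a black box in the proof of Theorem~\ref{yamama}. So there is nothing to compare against; the paper's ``proof'' is simply the citation, which matches what you yourself recommend at the end of your proposal.
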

We require only the case in which $\Gamma=\mathbb{R}^d$ acts continuously on a compact metrisable space by homeomorphisms, in which case the measurability hypotheses are satisfied trivially.

\begin{proof}[Proof of Theorem \ref{yamama}]
 Let us define a \emph{rational cube} to be a compact set $Q \subseteq \mathbb{R}^d$ which is equal to the Cartesian product of $d$ closed intervals with rational endpoints and equal, nonzero lengths, and denote the set of all rational cubes by $\mathfrak{Q}$. We claim that for every $Q \in \mathfrak{Q}$,
\begin{equation}\label{boo}\mu\left(\left\{f \in \mathbb{S}_d \colon \lim_{n \to \infty}\int \chi_Q \mathcal{Z}_nf = m(Q)\iint_{[0,1]^d}h(x)\,dxd\mu(h)\right\}\right)=1.\end{equation}
Before we prove the claim let us show that the truth of the claim implies the truth of the theorem. Firstly we observe that for any fixed rational cube $Q$ and function $f \in \mathbb{S}_d$ we have for all $t>1$
\begin{align*}\left|\int \chi_Q \mathcal{Z}_t f -  \int \chi_Q \mathcal{Z}_{\lfloor t\rfloor} f \right| &=\left|\frac{1}{t^d}\int_{tQ} f -  \frac{1}{\lfloor t \rfloor^d}\int_{\lfloor t \rfloor Q}  f \right| \\
&\leq \frac{1}{t^d} m(tQ \triangle \lfloor t \rfloor Q) + \left(\frac{1}{\lfloor t \rfloor^d} -\frac{1}{t^d}\right)m(\lfloor t \rfloor Q)\\
&\leq m(Q \triangle t^{-1}\lfloor t \rfloor Q) + dt^{-1}  m(Q).\end{align*}
Since this expression converges to zero as $t \to \infty$ it follows that the truth of the claim implies 
\[\mu\left(\left\{f \in \mathbb{S}_d \colon \lim_{t \to \infty}\int \chi_Q \mathcal{Z}_tf=m(Q)\iint_{[0,1]^d}h(x)\,dxd\mu(h)\right\}\right)=1\]
for every rational cube $Q \in \mathfrak{Q}$. The set $\mathfrak{Q}$ being countable, this in turn implies 
\[\mu\left(\left\{f \in \mathbb{S}_d \colon \lim_{t \to \infty}\int \chi_Q \mathcal{Z}_tf = m(Q)\iint_{[0,1]^d}h(x)\,dxd\mu(h)\text{ for all }Q \in \mathfrak{Q}\right\}\right)=1,\]
and since the linear span of the set of all characteristic functions of rational cubes is dense in $L^1(\mathbb{R}^d)$ it follows by a simple approximation argument that
 \[\mu\left(\left\{f \in \mathbb{S}_d \colon \lim_{t \to \infty}\int \varphi\mathcal{Z}_tf =\iint_{[0,1]^d}h(x)\,dxd\mu(h)\int\varphi\text{ for all }\varphi \in L^1(\mathbb{R}^d)\right\}\right)=1.\]
Since this is by definition equivalent to the statement 
\[\mu\left(\left\{f \in \mathbb{S}_d \colon \lim_{t \to \infty}\mathcal{Z}_tf =\iint_{[0,1]^d}h(x)\,dxd\mu(h)\cdot \mathbf{1} \right\}\right)=1\]
we conclude that the truth of the claim implies the truth of the theorem.

Let us now prove the claim. For the remainder of the proof we fix a rational cube $Q \subset \mathbb{R}^d$ with side length $r$ and centre point $v$. Let us write $|(u_1,\ldots,u_d)|_\infty:=\max\{|u_1|,\ldots,|u_d|\}$ for all $(u_1,\ldots,u_d) \in \mathbb{R}^d$, and define $F_n:=nQ$ for all $n \geq 1$. If $K \subseteq \mathbb{R}^d$ is any compact set then clearly
\begin{align*}
\lim_{n \to \infty} \frac{m(F_n \triangle (K+F_n))}{m(F_n)}&=\lim_{n \to \infty}\frac{m(nQ \triangle (K+nQ))}{n^d m(Q)}\\
&=\lim_{n \to \infty}\frac{m(Q \triangle (n^{-1}K+Q))}{m(Q)}=0.\end{align*}
If $n \geq 2$ and $u \in F_k - F_n$ for $k \in \{1,\ldots,n-1\}$ then clearly $|u|_\infty \leq n(|v|_\infty+r)$, so for all $n \geq 2$ we have
\[m\left(\left(\bigcup_{k=1}^{n-1}F_k\right)-F_n\right) \leq 2^dn^d(|v|_\infty +r)^d =\left(\frac{(2|v|_\infty+2r)^d}{m(Q)}\right)m(F_n).\]
The sequence $(F_n)$ therefore satisfies the requirements of Theorem \ref{dougie} with respect to the group $\Gamma=\mathbb{R}^d$. Let $\Phi \colon \mathbb{S}_d \to \mathbb{R}$ be the functional $\Phi(f):=\int_{[0,1]^d}f$ which is continuous by the definition of the topology on $\mathbb{S}_d$, and let $\mathcal{B}$ denote the Borel $\sigma$-algebra on $\mathbb{S}_d$. Applying Theorem \ref{dougie} to the measure space $(\mathbb{S}_d,\mathcal{B},\mu)$, group $\Gamma=\mathbb{R}^d$, action $v \mapsto \mathcal{T}_v$ and function $\Phi$ we obtain
\begin{equation}\label{oob}\mu\left(\left\{f \in \mathbb{S}_d \colon \lim_{n \to \infty} \frac{1}{m(nQ)}\int_{nQ} \Phi(\mathcal{T}_vf)dv = \iint_{[0,1]^d}h(x)\,dxd\mu(h)\right\}\right)=1.\end{equation}
It remains only to show that this is equivalent to the claimed expression \eqref{boo}. Let us define
\[\mathcal{Q}_n^+:=\bigcup_{x \in [0,1]^d} nQ-x,\qquad\mathcal{Q}_n^-:=\bigcap_{x \in [0,1]^d} nQ-x\]
for every $n \geq 1$. When $nr-1$ is non-negative the sets $\mathcal{Q}_n^{+}$ and $\mathcal{Q}_n^-$ are rational cubes with side length respectively $nr+1$ and $nr-1$ such that $\mathcal{Q}_n^- \subset nQ \subset \mathcal{Q}_n^+$. In particular we have $m(\mathcal{Q}_n^+ \setminus \mathcal{Q}_n^-)=(nr+1)^d-(nr)^d < 2^d(nr)^{d-1}$ for all $n \geq 1/r$.
For each $n \geq 1$ we have
\[\int_{nQ} \Phi(\mathcal{T}_vf)dv = \int_{nQ} \int_{[0,1]^d}f(x+v)dxdv = \int_{[0,1]^d}\int_{nQ-x} f(v)dvdx\]
and therefore in particular
\[\frac{1}{m(nQ)}\int_{\mathcal{Q}_n^-}f(v)dv \leq \frac{1}{m(nQ)}\int_{nQ} \Phi(\mathcal{T}_vf) dv \leq \frac{1}{m(nQ)}\int_{\mathcal{Q}_n^+}f(v)dv \]
and
\[\frac{1}{m(nQ)}\int_{\mathcal{Q}_n^-}f(v)dv \leq \frac{1}{m(nQ)}\int \chi_{nQ} f =\frac{1}{m(Q)}\int \chi_Q \mathcal{Z}_nf \leq \frac{1}{m(nQ)}\int_{\mathcal{Q}_n^+}f(v)dv\]
for all $n \geq 1/r$. Since
\[0 \leq \limsup_{n \to \infty} \frac{1}{m(nQ)} \left(\int_{\mathcal{Q}_n^+}f(v)dv- \int_{\mathcal{Q}_n^-}f(v)dv\right)\leq \lim_{n \to \infty} \frac{m(\mathcal{Q}_n^+\setminus \mathcal{Q}_n^-)}{n^dm(Q)}=0\]
we deduce that
\[\lim_{n \to \infty}\left|\frac{1}{m(nQ)}\int_{nQ} \Phi(\mathcal{T}_vf) dv - \frac{1}{m(Q)}\int \chi_{Q} \mathcal{Z}_nf\right|=0\]
for every $f \in \mathbb{S}_d$. We conclude that \eqref{oob} implies \eqref{boo} and the theorem is proved.
\end{proof}


\emph{Remark}. As an alternative to Lindenstrauss' ergodic theorem it would also have been possible for us to use a much earlier theorem due to A. Tempelman (\cite{Tempel67}, see also \cite{Bewley,Temp92}). Tempelman's result however cannot be applied directly to the sequence $(nQ)$ since it requires the additional hypotheses that the sequence $(F_n)$ must be nested and each $F_n$ must include the origin. To apply Tempelman's theorem in our context we would have to rewrite $\chi_{nQ}$ as a linear combination up to measure zero of $2^d$ characteristic functions of closed cuboids $nK_i$ each containing the origin in its boundary, apply Tempelman's theorem individually to all of the sequences $(nK_i)$, and then sum up to obtain the result \eqref{oob}. For example, to treat the case $d=1$ in this manner we would study the expression $\int_{\alpha n}^{\beta n } \Phi(\mathcal{T}_vf)dv$ by writing it as the difference $\int_0^{\beta n } \Phi(\mathcal{T}_vf)dv - \int_0^{\alpha n} \Phi(\mathcal{T}_vf)dv$.


\section{Proof of Theorem \ref{yapapa}}
Let us use the notation $\mathcal{M}_{\mathcal{T}}(X_f)$ to denote the set of all $\mathcal{T}$-invariant Borel probability measures on $\mathbb{S}_d$ which give full measure to the compact nonempty set $X_f$. By the Krylov-Bogolioubov Theorem $\mathcal{M}_{\mathcal{T}}(X_f)$ is nonempty. We claim that given any $\mu \in \mathcal{M}_{\mathcal{T}}(X_f)$ we may find an ergodic measure $\hat\mu \in \mathcal{M}_{\mathcal{T}}(X_f)$ such that \begin{equation}\label{foo}\iint_{[0,1]^d}h(x)\,dxd\hat\mu(h)\geq\iint_{[0,1]^d}h(x)\,dxd\mu(h).\end{equation}

By the definition of the topology of $\mathbb{S}_d$ the function $h \mapsto \int_{[0,1]^d}h$ is a continuous map from $\mathbb{S}_d$ to $\mathbb{R}$. The function $\nu \mapsto \iint_{[0,1]^d}h(x)dx\,d\nu(h)$ is thus a continuous affine functional on the metrisable topological space $\mathcal{M}_{\mathcal{T}}(X_f)$, which is a compact convex subspace of the locally convex topological space $C(\mathbb{S}_d)^*$ equipped with its weak-* topology. By applying a suitable version of Choquet's theorem to the measure $\mu$ (see e.g. \cite[p.14]{Phelps}) it follows that there exists a Borel probability measure $\mathbb{P}$ on $\mathcal{M}_{\mathcal{T}}(X_f)$ such that
\[\iiint_{[0,1]^d}h(x)\,dxd\nu(h)d\mathbb{P}(\nu)=\iint_{[0,1]^d}h(x)\,dxd\mu(h)\]
and $\mathbb{P}$ gives full measure to the set of \emph{extreme points} of $\mathcal{M}_{\mathcal{T}}(X_f)$, which is defined to be the set of all elements of $\mathcal{M}_{\mathcal{T}}(X_f)$ which may not be written as a strict linear combination of two distinct elements of $\mathcal{M}_{\mathcal{T}}(X_f)$. Consequently $\mathbb{P}$ gives nonzero measure to the set of all $\hat\mu \in \mathcal{M}_{\mathcal{T}}(X_f)$ which are extreme points and also satisfy \eqref{foo}, and in particular this set is nonempty. Choose any measure $\hat\mu$ belonging to this set. If $\hat\mu$ is not ergodic, then there exists a measurable set $A \subset \mathbb{S}_d$ such that $0<\hat\mu(A)<1$ and $\mathcal{T}_vA=A$ up to measure zero for all $v \in \mathbb{R}^d$. In this instance we may then write $\hat\mu$ as a strict linear combination of the distinct measures $\nu_1,\nu_2 \in \mathcal{M}_{\mathcal{T}}(X_f)$ defined by $\nu_1(B):=\hat\mu(A)^{-1}\hat\mu(B \cap A)$, $\nu_2(B):=\hat\mu(A)^{-1}\hat\mu(B \setminus A)$ for Borel sets $B \subseteq \mathbb{S}_d$. By virtue of its being an extreme point $\hat\mu$ cannot be written as strict linear combination of distinct invariant measures and it follows that $\hat\mu$ is ergodic. This completes the proof of the claim.

Let us now prove the theorem. Using the weak-* continuity of the functional $g \mapsto \int_{[0,t]^d}g$ we obtain
\[d^*(f)=\limsup_{t \to\infty} \sup_{v \in \mathbb{R}^d} \frac{1}{m([0,t]^d)}\int_{[0,t]^d}\mathcal{T}_vf = \limsup_{t \to \infty} \sup_{g \in X_f} \frac{1}{t^d} \int_{[0,t]^d} g.\]
Given $\mu \in \mathcal{M}_{\mathcal{T}}(X_f)$, choose an ergodic measure $\hat\mu\in\mathcal{M}_{\mathcal{T}}(X_f)$ such that \eqref{foo} holds. Using Theorem \ref{yamama} we find that for $\hat\mu$-almost-every $g$ we have $g \in X_f$ and
\[\lim_{t \to \infty} \frac{1}{t^d} \int_{[0,t]^d} g=\lim_{t \to \infty} \int_{[0,1]^d} \mathcal{Z}_t g =\iint_{[0,1]^d}h(x)\,dxd\hat\mu(h)\geq \iint_{[0,1]^d}h(x)\,dxd\mu(h).\]
Since $\mu$ is arbitrary it follows by combining these two expressions that $d^*(f)$ is greater than or equal to the right-hand member of \eqref{beh}.

Let us now prove the opposite inequality. Let $(t_n)_{n=1}^\infty$ be a sequence of real numbers tending to infinity and $(v_n)_{n=1}^\infty$ a sequence of vectors in $\mathbb{R}^d$ such that $d^*(f)=\lim_{n \to \infty} t_n^{-d} \int_{[0,t_n]^d}\mathcal{T}_{v_n}f$. Without loss of generality we assume that $t_n>1$ for every $n$. For each $n \geq 1$ define a Borel probability measure on $X_f$ by
\[\mu_n:=\frac{1}{t_n^d}\int_{[0,t_n]^d} \delta_{\mathcal{T}_{v_n+u}f} du.\]
Since $\mathcal{M}$ is compact and metrisable we may choose a strictly increasing sequence of integers $(n_j)_{j=1}^\infty$ and measure $\mu \in \mathcal{M}$ such that $\lim_{j \to \infty}\mu_{n_j}=\mu$. For each $n \geq 1$ we have
\begin{align*}\iint_{[0,1]^d}h(x)\,d\mu_n(h) &=\int_{[0,t_n]^d}\left(\int_{[0,1]^d}\mathcal{T}_{v_n+u}f(x)dx\right)du\\
&=\int_{[0,1]^d}\int_{[0,t_n]^d}\mathcal{T}_{v_n}f(x+u)du dx\\
&\geq \int_{[0,t_n-1]^d}\mathcal{T}_{v_n}f(w)dw\end{align*}
where we have used the fact that $ [0,t_n-1]^d \subset [0,t_n]^d-x $ for all $x \in [0,1]^d$. Since we additionally have for each $n \geq 1$
\[ \frac{1}{t_n^d}\left|\int_{[0,t_n-1]^d}\mathcal{T}_{v_n}f(u)du- \int_{[0,t_n]^d}\mathcal{T}_{v_n}f(u)du\right| \leq \frac{m([0,t_n]^d\setminus [0,t_n-1]^d)}{t_n^d}< \frac{2^d}{t_n}\]
it follows that
\begin{align*}\iint_{[0,1]^d}h(x)d\mu(h) &= \lim_{j \to \infty} \iint_{[0,1]^d}h(x)d\mu_{n_j}(h) \\
&\geq \liminf_{n \to \infty} \frac{1}{t_n^d}\int_{[0,t_n-1]^d}\mathcal{T}_{v_n}f(u)du = d^*(f).\end{align*}
It is clear that $\mu_n(X_f)=1$ for every $n$, and since $X_f$ is closed this implies $\mu(X_f)=1$.  We claim that $\mu$ is $\mathcal{T}$-invariant. If $w \in \mathbb{R}^d$ and $\Psi \colon \mathbb{S}_d \to \mathbb{R}$ is continuous, then
\begin{eqnarray*}\lefteqn{\left|\int \Psi(h)\,d\mu(h)- \int \Psi(\mathcal{T}_w h)\,d\mu(h)\right|}\\
&&=\lim_{j \to \infty} \left|\int \Psi(h)\,d\mu_{n_j}(h)- \int \Psi(\mathcal{T}_w h)\,d\mu_{n_j}(h)\right|\\
&&\leq \lim_{j \to \infty} \frac{1}{t_{n_j}^d}\left|\int_{[0,t_n]^d} \Psi\left(\mathcal{T}_{v_n+u}f\right) du -\int_{[0,t_n]^d}\Psi\left(\mathcal{T}_{v_n+u+w}f\right)du\right|\\
&&\leq \lim_{n \to \infty} \frac{|\Psi|_\infty}{t_{n}^d} m\left([0,t_n]^d \triangle \left([0,t_n]^d -w\right)\right)\\
&&= \lim_{n \to \infty} |\Psi|_\infty m\left([0,1]^d \triangle \left([0,1]^d -t_n^{-1}w\right)\right)=0.\end{eqnarray*}
Since $\Psi$ is arbitrary it follows that $\mu=\mathcal{T}_w^*\mu$, and since $w$ is arbitrary we conclude that $\mu$ is $\mathcal{T}$-invariant and hence attains the supremum in \eqref{beh}. Replacing $\mu$ with an ergodic measure $\hat\mu\in\mathcal{M}_{\mathcal{T}}(X_f)$ which satisfies \eqref{foo} completes the proof.

\section{Proof of Proposition \ref{open}}\label{dongle}
Clearly the set $\mathbb{U}$ contains $\delta \mathbf{1}$ for all $\delta \in (0,1]$. To prove the set's openness we follow a Fourier-analytic approach suggested by \cite{Bour,Bukh}. Let us define $I(g):=\iint g(x)g(x-y)d\sigma(y)dx$ for all $g \in \mathbb{S}_2$, and for each set $B\subset \mathbb{R}^2$ with finite measure define $I_B(g):=I(\chi_B g)$. Using the monotone convergence theorem one may easily show that $I(g)$ equals the supremum of $I_B(g)$ over all finite-measure  sets $B$, so to show that $\mathbb{U}$ is open it is sufficient to show that each function $I_B \colon \mathbb{S}_2 \to \mathbb{R}$ is continuous. Let $B$ be a bounded measurable set and suppose that $(f_n)$ is a sequence in $\mathbb{S}_2$ converging to $f$. We may write
\begin{align*}I_B(f_n)=\int (\chi_B f_n)(x) ((\chi_B f_n) * \sigma)(x) dx &= \int \widehat{\chi_B f_n}(\xi) \overline{\widehat{\chi_B f_n}(\xi)\hat{\sigma}(\xi)}d\xi\\&=\int |\widehat{\chi_B f_n}(\xi)|^2 \hat{\sigma}(\|\xi\|)d\xi\end{align*}
using Parseval's theorem, with a similar identity holding for $\chi_Bf$. Since $f_n$ converges in the weak-* topology to $f$ it follows that $\widehat{\chi_B f_n}$ converges pointwise to $\widehat{\chi_B f}$. The sequence of integrands $|\widehat{\chi_B f_n}(\xi)|^2 \hat{\sigma}(\|\xi\|)$ is uniformly bounded since $|\widehat{\chi_B f_n}(\xi)| \leq m(B)$ and $| \hat{\sigma}(\|\xi\|)|\leq 1$ for all $\xi \in \mathbb{R}^2$ and $n \geq 1$. Since $\hat{\sigma}(\|\xi\|)$ tends to zero as $\|\xi\| \to \infty$, we may for each $\varepsilon>0$ find a bounded measurable set $K\subset \mathbb{R}^2$ such that $|\hat{\sigma}(\xi)|\leq \varepsilon $ for all $\xi \in \mathbb{R}^2 \setminus K$, and consequently
\[\left|\int_{\mathbb{R}^2 \setminus K}  |\widehat{\chi_B f_n}(\xi)|^2 \hat{\sigma}(\|\xi\|)d\xi\right| \leq \varepsilon \int_{\mathbb{R}^2} |\widehat{\chi_B f_n}(\xi)|^2 d\xi=\varepsilon \|\chi_B f_n\|_2^2 \leq \varepsilon m(B)\]
for all $n \geq 1$ and similarly for $\chi_Bf$. Applying the dominated convergence theorem to the integral over $K$ we deduce that $\limsup_{n \to \infty}| I_B(f_n)-I_B(f)|\leq 2\varepsilon m(B)$, and since $\varepsilon>0$ is arbitrary the result follows.

\section{Proof of Theorem \ref{grah}}\label{try}
We adapt the method of Bourgain \cite[Proposition 3]{Bour}. 
 For notational convenience, for each $y \in S^1$ and $\alpha \in (0,M]$ we define a Borel probability measure $\nu_y^\alpha$ on $\mathbb{R}^2$ by $\nu_y^\alpha(A):=\int_0^\infty e^{-s}\chi_A(2\alpha y^\perp+s y)ds$ for every Borel set $A \subseteq \mathbb{R}^2$. The integral \eqref{wang'aa} may thus be rewritten as
\[\mathbf{D}_1^\alpha(g):=\iiint g(x)g(x+y)g(x+z)d\nu_y^\alpha(z) d\sigma(y)dx.\]
Let us fix $\delta \in (0,1]$ and $M>0$ for the remainder of the proof, and show that $\delta\mathbf{1}$ belongs to the interior of the set
\[\mathbb{U}_M:=\{g \in \mathbb{S}_2 \colon \mathbf{D}_1^\alpha(g)>0 \text{ for all }\alpha \in (0,M]\}.\]
Without loss of generality we assume $M>1$.  To prove the theorem we will show that if $(f_n)$ is a sequence of elements of $\mathbb{S}_2$ which converges to $\delta\mathbf{1}$, then for all sufficiently large $n$ we have $\mathbf{D}_1^\alpha(f_n)>0$ for every $\alpha \in (0,M]$.
To achieve this we will prove the following: there exists a large ball $B \subset \mathbb{R}^2$ such that if $(f_n)$ is a sequence of elements of $\mathbb{S}_2$ which converges to $\delta\chi_B$, and each $f_n$ is supported in $B$, then for all sufficiently large $n$ we have $\mathbf{D}^\alpha_1(f_n)>0$ for all $\alpha \in (0,M]$. To see that this implies the previous statement, note that if $(f_n)$ is a general sequence converging to $\delta\mathbf{1}$ then $(\chi_B.f_n)$ is a sequence supported in $B$ which converges to $\delta\chi_B$, and clearly $\mathbf{D}^\alpha_1(f_n) \geq \mathbf{D}^\alpha_1(\chi_B . f_n)$ for every $n \geq 1$ and $\alpha \in (0,M]$.

We next establish some more notation and fix some parameters. For each $\lambda>0$ let $P_\lambda \colon \mathbb{R}^2 \to \mathbb{R}$ be the unique continuous function whose Fourier transform satisfies $\hat{P}_\lambda(\xi)=e^{-\lambda\|\xi\|}$ for all $\xi \in \mathbb{R}^2$. Note that $P_{\lambda}(rx)=r^{-1}P_{\lambda/r}(x)$ for all $r>0$ and $x \in \mathbb{R}^2$, and by a standard argument we have $\lim_{\lambda \to 0} \|g-(g*P_\lambda)\|_1=0$ for all $g \in L^1(\mathbb{R}^d)$ (see e.g. \cite{Katznelson}). Fix real numbers $\lambda_1,\lambda_2>0$ such that $2\lambda_1^{1/3}<\delta^3/7$ and $12M\lambda_2^{-1}<\delta^3/7$, and for each $r>0$ let $B_r(0)$ denote the closed ball in $\mathbb{R}^2$ centred at the origin with radius $r$. Since
\begin{eqnarray*}\lefteqn{\left\|(\chi_{B_r(0)} * P_{\lambda_1})-(\chi_{B_r(0)} *P_{\lambda_2})\right\|_1} \\& &= \int \left|\int\chi_{B_r(0)}(y)(P_{\lambda_1}-P_{\lambda_2})(x-y)dy\right| dx\\
&&=r^4\int \left|\int\chi_{B_r(0)}(ry)(P_{\lambda_1}-P_{\lambda_2})(rx-ry)dy\right| dx\\
&&=r^2\int \left|\int\chi_{B_1(0)}(y)(P_{\lambda_1/r}-P_{\lambda_2/r})(x-y)dy\right| dx\\
&&=r^2\left\|(\chi_{B_1(0)} * P_{\lambda_1/r})-(\chi_{B_1(0)} * P_{\lambda_2/r})\right\|_1\end{eqnarray*}
for all $r>0$, it follows that
\[\lim_{r \to \infty} m(B_r(0))^{-1}\|(\chi_{B_r(0)} * P_{\lambda_1})- (\chi_{B_r(0)} * P_{\lambda_2})\|_1=0.\]
It is clear that
\[\lim_{r \to \infty}m(B_r(0))^{-1}\mathbf{D}_1^M(\delta \chi_{B_r(0)})=\delta^3,\]
and so by taking $B:=B_r(0)$ for some sufficiently large $r$ we may obtain
\begin{equation}\label{whim}\|(\delta\chi_{B} * P_{\lambda_1})- (\delta\chi_{B} * P_{\lambda_2})\|_1<\frac{1}{7}\delta^3m(B)
\end{equation} and 
\begin{equation}\label{lard}\mathbf{D}_1^M(\delta\chi_B)>\frac{6}{7}\delta^3m(B).\end{equation}
We fix a bounded measurable set $B \subset \mathbb{R}^2$ with these properties for the remainder of the proof.

Having fixed these parameters we may now commence the proof proper. Define $f:=\delta \chi_B$ and let $f_n$ be a sequence of elements of $\mathbb{S}_2$ which converges to $f$ with the property that every $f_n$ is supported in $B$. For each $\alpha>0$ and $g \in \mathbb{S}_2$ supported in $B$ let
\[\mathbf{D}_2^\alpha(g):=\iiint g(x)g(x+y)( g * P_{\lambda_1})(x+z)d\nu_y^\alpha(z) d\sigma(y)dx,\]
\[\mathbf{D}_3^\alpha(g):=\iiint g(x)g(x+y)( g * P_{\lambda_2})(x+z)d\nu_y^\alpha(z) d\sigma(y)dx,\]
\[\mathbf{D}_4(g):=\iint g(x)g(x+y)( g * P_{\lambda_2})(x) d\sigma(y)dx.\]
The Fourier transform $\hat{\nu}_y^\alpha$ of $\nu_y^\alpha$ satisfies
\[\hat{\nu}_y^\alpha(\xi):=\int_0^\infty e^{-2\pi i\langle 2\alpha y^\perp + s y,\xi\rangle -s}ds = \frac{e^{-4\alpha\pi i \langle y^\perp,\xi\rangle}}{1+2\pi i \langle y,\xi\rangle}\]
for all $\xi \in \mathbb{R}^2$, and therefore in particular
\begin{align}\label{boom}\int\left|\hat\nu_y^\alpha(\xi)\right|d\sigma(y) &=\int_0^1 \frac{d\theta}{\sqrt{1+4\pi^2\|\xi\|^2\cos^2(2\pi \theta)}}\\
\notag&\leq\left(\int_0^1 \frac{d\theta}{1+4\pi^2\|\xi\|^2\cos^2(2\pi \theta)}\right)^{1/2} \\
\notag&=\frac{1}{\sqrt[4]{1+4\pi^2\|\xi\|^2}}\\
\notag&\leq \min\left\{1,\frac{1}{\sqrt{\|\xi\|}}\right\}\end{align}
when $\xi$ is nonzero. For each $\alpha \in (0,M]$ we have
\begin{align*}\int \left|\left(g-\left(g * P_{\lambda_1}\right)\right)(x)\right|^2dx &= \int \left|\hat{g}(\xi)\right|^2\left(1-e^{-\lambda_1\|\xi\|}\right)^2d\xi\\ &<\int \left|\hat{g}(\xi)\right|^2d\xi = \|g\|_2^2\leq m(B)\end{align*}
and thus
\begin{align}\label{froggate}\left|\mathbf{D}_1^\alpha(g)-\mathbf{D}_2^\alpha(g)\right|&\leq  \iint\left| ((g - (g * P_{\lambda_1}))*\nu^\alpha_y)(x)\right|dx d\sigma(y)\\\nonumber
&\leq  m(B)^{\frac{1}{2}}\left( \iint\left| ((g - (g * P_{\lambda_1}))*\nu_y^\alpha)(x)\right|^2 dx d\sigma(y)\right)^{\frac{1}{2}}\\\nonumber
&=  m(B)^{\frac{1}{2}}\left( \iint\left|\hat{g}(\xi)\right|^2 \left(1-e^{-\lambda_1\|\xi\|}\right)^2 \left|\hat{\nu}^\alpha_y(\xi)\right| d\xi d\sigma(y)\right)^{\frac{1}{2}}\\\nonumber
&< m(B)^{\frac{1}{2}}\left( \int_{\|\xi\|\leq\lambda_1^{-\frac{4}{3}}}\left|\hat{g}(\xi)\right|^2\lambda_1^2 \|\xi\|^2 d\xi  + \int_{\|\xi\|>\lambda_1^{-\frac{4}{3}}}\frac{\left|\hat{g}(\xi)\right|^2}{\|\xi\|^{\frac{1}{2}}}d\xi\right)^{\frac{1}{2}}\\\nonumber
&< 2 m(B)^{1/2}\lambda_1^{1/3}\|g\|_2 \leq 2\lambda_1^{1/3}m(B)<\frac{1}{7}\delta^3m(B)\end{align}
using the Cauchy-Schwarz inequality, Parseval's theorem and \eqref{boom} together with the trivial bound $|\hat{\nu}_y^\alpha(\xi)| \leq 1$ and our choice of $\lambda_1$. We also clearly have
\begin{equation}\label{horatio}\left|\mathbf{D}^\alpha_2(g)-\mathbf{D}^\alpha_3(g)\right|\leq \|(g * P_{\lambda_1}) - (g * P_{\lambda_2})\|_1.\end{equation}
Since for all $z \in \mathbb{R}^2$
\[\int \left|\left(g * P_{\lambda_2}\right)(x+z)-\left(g * P_{\lambda_2}\right)(x)\right|^2dx \leq 4\|g\|_2^2\leq 4m(B)\]
we furthermore have
\begin{eqnarray}\label{dingus}\lefteqn{\left|\mathbf{D}^\alpha_3(g)-\mathbf{D}_4(g)\right|}\\\nonumber
&&\leq \sup_{y \in S^1} \iint\left| (g * P_{\lambda_2})(x+z)-(g * P_{\lambda_2})(x)\right|dx d\nu_y^\alpha(z)\\\nonumber
&&\leq \sup_{y \in S^1} 2 m(B)^{\frac{1}{2}}\left( \iint\left|  (g * P_{\lambda_2})(x+z)-(g * P_{\lambda_2})(x)\right|^2dx  d\nu_y^\alpha(z) \right)^{\frac{1}{2}}\\\nonumber
&&=\sup_{y \in S^1}2m(B)^{\frac{1}{2}}\left( \iint\left|\hat{g}(\xi)\right|^2\left|1-e^{2\pi i \langle z,\xi\rangle}\right|^2  e^{-2\lambda_2\|\xi\|}  d\xi d\nu_y(z)^\alpha\right)^{\frac{1}{2}}\\\nonumber
&&\leq 4\pi m(B)^{\frac{1}{2}}\sup_{y \in S^1}\left( \int\left|\hat{g}(\xi)\right|^2  \left(\int |\langle z,\xi\rangle|^2 d\nu_y^\alpha(z)\right)e^{-2\lambda_2\|\xi\|}  d\xi\right)^{\frac{1}{2}}\\\nonumber
&&\leq 4\pi m(B)^{\frac{1}{2}}\left( \int\left|\hat{g}(\xi)\right|^2\|\xi\|^2  \left(\int_0^\infty e^{-s}(4\alpha^2+s^2)ds\right)e^{-2\lambda_2\|\xi\|}  d\xi\right)^{\frac{1}{2}}\\\nonumber
&&\leq \left(\frac{4\pi M\sqrt{6}}{e\lambda_2}\right) m(B)^{\frac{1}{2}}\|g\|_2 <12M\lambda_2^{-1}m(B)<\frac{1}{7}\delta^3m(B)\end{eqnarray}
using in turn the Cauchy-Schwarz inequality, Parseval's theorem, Lipschitz continuity, Cauchy-Schwarz again, the elementary inequality $t^2e^{-\lambda_2t}\leq (e\lambda_2)^{-2}$ and our choice of $\lambda_2$. Combining \eqref{froggate}, \eqref{horatio} and \eqref{dingus} we conclude that
\begin{equation}\label{boobs}\left|\mathbf{D}_1^\alpha(g)-\mathbf{D}_4(g)\right|\leq \frac{\delta^3 m(B)}{7}+ \|(g * P_{\lambda_1}) - (g * P_{\lambda_2})\|_1\end{equation}
for every $\alpha \in (0,M]$ and every $g \in \mathbb{S}_2$ which is supported in $B$. Combining \eqref{boobs}, \eqref{whim} and \eqref{lard} we obtain
\begin{equation}\label{sparrow}\mathbf{D}_4(f) >  \mathbf{D}_1^M(f) - \frac{2}{7}\delta^3m(B) - \|(\delta\chi_{B} * P_{\lambda_1})- (\delta\chi_{B} * P_{\lambda_2})\|_1>\frac{3}{7}\delta^3m(B).\end{equation}
Since $f_n$ converges to $f$ in the weak-* topology it follows that $f_n * P_{\lambda_1}$ and $f_n * P_{\lambda_2}$ converge pointwise almost everywhere to $f * P_{\lambda_1}$ and $f * P_{\lambda_2}$ respectively. Since furthermore
\[|(f_n * P_{\lambda_1})(x)- (f_n * P_{\lambda_2})(x)|\leq (\chi_B * P_{\lambda_1})(x)+ (\chi_B * P_{\lambda_2})(x)\]
pointwise almost everywhere, it follows from the dominated convergence theorem together with \eqref{whim} that
\[\|(f_n * P_{\lambda_1})- (f_n * P_{\lambda_2})\|_1 < \frac{1}{7}\delta^3m(B)\]
for all sufficiently large $n$. Applying \eqref{boobs} once more and noticing that this inequality does not depend on $\alpha \in (0,M]$, we find that
\[\inf_{\alpha \in (0,M]}\mathbf{D}_1^\alpha(f_n) >  \mathbf{D}_4(f_n) - \frac{2}{7}\delta^3m(B)\]
when $n$ is sufficiently large. In view of \eqref{sparrow}, it follows that to complete the proof of the theorem it is sufficient to prove that $\lim_{n \to \infty}\mathbf{D}_4(f_n)=\mathbf{D}_4(f)$.

We now prove that this is indeed the case. By Parseval's theorem we have
\[\mathbf{D}_4(f_n)=\int \hat{f_n}(\xi)e^{-\lambda_2\|\xi\|} \left(\widehat{f_n (f_n * \sigma)}\right)(\xi)d\xi\]
for all $n \geq 1$, and a similar expression holds for $f$. For all $\xi \in \mathbb{R}^2$ we have
\[\left|\hat{f_n}(\xi)e^{-\lambda_2\|\xi\|} \left(\widehat{f_n (f_n * \sigma)}\right)(\xi)\right| \leq \|f_n\|_1 e^{-\lambda_2\|\xi\|} \left\|f_n (f_n * \sigma)\right\|_1\leq e^{-\lambda_2\|\xi\|}m(B)^3,\]
and so if we can establish
\begin{equation}\label{bollll}\lim_{n \to \infty}  \hat{f_n}(\xi)e^{-\lambda_2\|\xi\|} \left(\widehat{f_n (f_n * \sigma)}\right)(\xi)= \hat{f}(\xi)e^{-\lambda_2\|\xi\|} \left(\widehat{f (f * \sigma)}\right)(\xi)\end{equation}
for every $\xi \in \mathbb{R}^2$ then it will follow by the dominated convergence theorem that $\mathbf{D}_4(f)=\lim_{n \to \infty}\mathbf{D}_4(f_n)$ as required.
To complete the proof of the theorem we fix $\xi \in \mathbb{R}^2$ and establish \eqref{bollll}. Since each $f_n$ is supported in $B$ it follows from the weak-* convergence of $f_n$ to $f$ that $\lim_{n \to \infty}\hat{f}_n(\eta) = \hat{f}(\eta)$ for all $\eta \in \mathbb{R}^2$. For all $n \geq 1$ we have
\[\widehat{f_n(f_n * \sigma)}(\xi)=\int \hat{f}_n(\eta)\hat{f}_n(\eta-\xi) \hat\sigma(\eta-\xi)d\eta\]
using standard properties of convolutions, with a similar identity for $f$ in place of $f_n$. The Fourier transform $\hat\sigma(\eta)$ is given by the Bessel function $J_0(2\pi\|\eta\|)=\int_0^{2\pi}\cos (2\pi\|\eta\| \sin \theta) d\theta$ which converges to zero as $\|\eta\| \to \infty$. Given any $\varepsilon>0$ we may therefore choose a bounded measurable set $K \subset \mathbb{R}^2$ such that $|\hat\sigma(\eta-\xi)|\leq \varepsilon$ for all $\eta \in \mathbb{R}^2 \setminus K$. Since trivially $|\hat\sigma(\eta)| \leq 1$ for every $\eta$ we obtain for all $n \geq 1$
\begin{align*}\left|\widehat{f_n(f_n * \sigma)}(\xi)-\widehat{f(f * \sigma)}(\xi)\right|\leq& \int_K \left|\hat{f}_n(\eta)\hat{f}_n(\eta-\xi)-\hat{f}(\eta)\hat{f}(\eta-\xi)\right|d\eta \\
&+ \varepsilon\int_{\mathbb{R}^2 \setminus K} \left|\hat{f}_n(\eta)\hat{f}_n(\eta-\xi)-\hat{f}(\eta)\hat{f}(\eta-\xi)\right|d\eta.\end{align*}
Since $\left|\hat{f}_n(\eta)\hat{f}_n(\eta-\xi)-\hat{f}(\eta)\hat{f}(\eta-\xi)\right| \leq \|f_n\|_1^2+\|f\|_1^2 \leq 2m(B)^2$ for all $\eta \in \mathbb{R}^2$ and $n \geq 1$ the dominated convergence theorem yields
\[\lim_{n \to \infty}\int_K \left|\hat{f}_n(\eta)\hat{f}_n(\eta-\xi)-\hat{f}(\eta)\hat{f}(\eta-\xi)\right|d\eta =0.\]
On the other hand, the Cauchy-Schwarz inequality together with Parseval's identity yields
\[\varepsilon\int_{\mathbb{R}^2 \setminus K} \left|\hat{f}_n(\eta)\hat{f}_n(\eta-\xi) - \hat{f}(\eta)\hat{f}(\eta-\xi)\right|d\eta\leq \varepsilon\left(\|f_n\|_2^2+\|f\|_2^2\right) \leq 2\varepsilon m(B)^2\]
for all $n \geq 1$. Combining these results yields
\[\limsup_{n \to \infty}\left|\widehat{f_n(f_n * \sigma)}(\xi)-\widehat{f(f * \sigma)}(\xi)\right|\leq 2\varepsilon m(B)^2\]
and since $\varepsilon>0$ and $\xi \in\mathbb{R}^2$ are arbitrary we deduce that \eqref{bollll} holds for all $\xi \in \mathbb{R}^2$. By the dominated convergence theorem we obtain $\lim_{n \to \infty} \mathbf{D}_4(f_n)=\mathbf{D}_4(f)$ and hence for every sufficiently large $n$ we have $\mathbf{D}_1^\alpha(f_n)>0$ simultaneously for all $\alpha \in (0,M]$. The proof is complete.

\section{Acknowledgments}
The author would like to thank R. Nair, T. J. Sullivan and D. McCormick for helpful conversations.

\bibliographystyle{siam}

\bibliography{dist}

\end{document}